\renewcommand*{\HyperDestNameFilter}[1]{\jobname-#1} 
\numberwithin{equation}{section}
\newcommand{\noi}{\noindent}
 \theoremstyle{plain}
\newtheorem{theor}{Theorem}[section]
\newtheorem{conj}[theor]{Conjecture}
\newtheorem{prop}[theor]{Proposition}
\newtheorem{lem}[theor]{Lemma}
\newtheorem{cor}[theor]{Corollary}
\theoremstyle{remark}
\newtheorem{rem}[theor]{Remark}
\theoremstyle{plain}
\newtheorem{defi}[theor]{Definition}
\numberwithin{equation}{section}
\newcommand{\CC}{{\mathbb C}}
\renewcommand{\SS}{{\mathbf S}}
\newcommand{\QQ}{{\mathbb Q}}
\newcommand{\ZZ}{{\mathbb Z}}
\newcommand{\VV}{{\mathbb V}}
\newcommand{\G}{{\mathbf G}}
\newcommand{\HH}{{\mathbf H}}
\newcommand{\NN}{{\mathbb N}}
\newcommand{\Spec}{{\rm Spec}\,}
\newcommand{\HL}{\textnormal{HL}}
\newcommand{\lo}{\longrightarrow}
\newcommand{\Hom}{{\rm Hom}}
\newcommand{\Gal}{{\rm Gal}}
\newcommand{\ad}{{\rm ad}}
\newcommand{\der}{{\rm der}}
\newcommand{\nor}{{\rm nor}}
\newcommand{\GL}{{\rm \bf GL}}
\newcommand{\F}{\mathcal{F}}
\newcommand{\Aut}{\textnormal{Aut}}
\newcommand{\an}{\textnormal{an}}
\newcommand{\cT}{{\mathcal T}}
\newcommand{\cA}{{\mathcal A}}
\newcommand{\cD}{{\mathcal D}}
\newcommand{\cV}{{\mathcal V}}
\newcommand{\cO}{{\mathcal O}}
\newcommand{\cY}{{\mathcal Y}}
\newcommand{\oQ}{{\overline{\QQ}}}
\newcommand{\Hod}{\textnormal{Hod}}
\newcommand{\Hdg}{\textnormal{Hdg}}
\newcommand{\ws}{\textnormal{ws}}
\newcommand{\norm}{\textnormal{nor}}
\newcommand{\MIC}{\textnormal{MIC}}
\newcommand{\Loc}{\textnormal{Loc}}
\newcommand{\prim}{\textnormal{prim}}
\begin{document}
\title{On the fields of definition of Hodge loci}
\author{B. Klingler, A. Otwinowska and D. Urbanik}

\begin{abstract}
  Given a polarizable variation of $\ZZ$-Hodge structure $\VV$ over a
  smooth quasi-projective complex variety $S$ Cattani, Deligne and
  Kaplan proved that the Hodge locus of closed points $s \in S$ such that $\VV_s$ admits exceptional
Hodge tensors is a countable union of strict closed irreducible algebraic
subvarieties of $S$, called the special subvarieties of $S$ for
$\VV$.

When $\VV$ is moreover defined over a number field $L \subset \CC$
i.e. both $S$ and the filtered algebraic module 
with integrable connection $(\cV, F^\bullet, \nabla)$ associated with $\VV$ are
defined over $L$, any special subvariety of $S$ for $\VV$ is
conjectured to be defined over $\oQ$, and
its $\Gal(\oQ/L)$-conjugates to be again special subvarieties for
$\VV$. In the geometric case this follows from the conjecture that Hodge classes are absolute Hodge.

We prove that if $S$ is defined over a number field $L$ then any special
subvariety of $S$ for $\VV$ which is weakly
non-factor is defined over $\oQ$; and that its
$\Gal(\oQ/L)$-conjugates are special if moreover $\VV$ is
defined over $L$.  The non-factor condition
roughly means that the special subvariety cannot be non-trivially 
Hodge-theoretically deformed inside a larger special subvariety.

Our result implies that if $S$ is defined over a number field $L \subset \CC$ and if the adjoint group
of the generic Mumford-Tate group of $\VV$ is simple then any strict special
subvariety of $S$ for $\VV$ with non-trivial algebraic monodromy and which
is maximal for these properties is defined over $\oQ$; and that its
$\Gal(\oQ/L)$-conjugates are special if moreover $\VV$ is defined over
$L$. It also implies   
that special subvarieties for $\ZZ$VHSs defined over a number
field are defined over $\oQ$ if and only if it holds true for special points.
  
\end{abstract}

\maketitle

\section{Introduction} \label{intro}

\subsection{Hodge loci}

The main object of study in this article are Hodge loci. Let us start
by recalling their definition in the geometric case, where their
behaviour is predicted by the Hodge conjecture.

\subsubsection{The geometric motivation}
Let $f:X \to S$ be a smooth projective morphism of smooth irreducible  complex
quasi-projective varieties and let $k$ a positive integer. The Betti
and De Rham incarnation of the $2k$-th cohomology of the fibers of $f$
give rise to a weight zero polarizable variation of Hodge structure $(\VV:=R^{2k}f^\an_* \ZZ (k),
\cV:= R^{2k}f_* \Omega^\bullet_{X/S}, F^\bullet, \nabla)$ on
$S$. Here $\VV$ is the local system on the complex manifold $S^\an$
associated to $S$ parametrizing the $2k$-th Betti cohomology of the
fibers of $f$; $\cV$ is the corresponding algebraic vector bundle,
endowed with its flat Gau\ss\--Manin
connection; and $F^\bullet$ is the Hodge
filtration on $\cV$ induced by the stupid
filtration on the algebraic De Rham complex 
$\Omega^\bullet_{X/S}$. In this situation one defines the locus of
exceptional Hodge classes $\Hod(\cV) \subset \cV^\an$ as the set of Hodge classes $\lambda \in F^0\cV^\an \cap
\VV_\QQ$ whose orbit under monodromy is infinite, and the Hodge locus $\HL(S,
\VV)$ as its projection in $S^\an$. Thus $\HL(S, \VV)$
is the subset of points $s$ in $S^\an$ for which the Hodge structure
$H^{2k}(X_s, \ZZ(k))$ admits more Hodge classes
than the very general fiber $H^{2k}(X_{s'}, \ZZ(k))$.

According to the Hodge conjecture each $\lambda \in \Hod(\cV)$ should be the cycle
class of an exceptional algebraic cycle in the corresponding fiber of
$f$. As algebraic subvarieties of the fibers are parametrized by a
common Hilbert scheme, the Hodge conjecture and an easy countability
argument implies the following (as noticed by 
Weil in \cite{Weil}, where he asks for an unconditional proof):

\medskip
\begin{tabular}{l|m{13cm}}
 $(\star) $ &The locus of Hodge classes $\Hod(\cV)$ is a countable union of closed irreducible
algebraic subvarieties of $\cV$. The restriction of $f$ to any such subvariety of $\cV$ is finite over its image. In particular
the Hodge locus
$\HL(S, \VV)$ is a countable union of closed irreducible
              algebraic subvarieties of $S$.
\end{tabular}

\subsubsection{Algebraicity of Hodge loci}
More generally let $(\VV, \cV, F^\bullet, \nabla)$ be any
polarizable variation of $\ZZ$-Hodge
structure ($\ZZ$VHS) on a smooth complex irreducible algebraic 
variety $S$. Thus $\VV$ is a finite rank $\ZZ_{S^\an}$-local system on
the complex manifold $S^\an$; and $(\cV, F^\bullet,
\nabla)$ is the unique regular algebraic module with integrable connection on
$S$ whose analytification is $\VV \otimes_{\ZZ_{S^\an}} \cO_{S^{\an}}$ endowed
with its Hodge filtration $F^\bullet$ and the holomorphic flat connection
$\nabla^\an$ defined by $\VV$, see
\cite[(4.13)]{Schmid}). We will abbreviate the $\ZZ$VHS
$(\VV, \cV, F^\bullet, \nabla)$ simply by $\VV$. 
If we define the locus of exceptional Hodge classes $\Hod(\cV) \subset \cV$ and the Hodge locus $\HL(S,
\VV) \subset S$ as in the geometric case, Cattani, Deligne and Kaplan
\cite{CDK95} proved a vast generalization of Weil's expectation:
\begin{theor}(Cattani-Deligne-Kaplan) \label{CDK} Let $\VV$ be a $\ZZ$VHS on a
  smooth complex quasi-projective variety
  $S$. Then $(\star)$ holds true.
\end{theor}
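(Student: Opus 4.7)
The plan is to divide the argument into local analytic description, global analytic gluing via monodromy, and the decisive step of passing from analytic to algebraic. The first two stages are essentially formal, while the third requires the heavy machinery of degenerations of Hodge structures.

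For the analytic steps, I first work on a simply connected open $U \subset S^\an$, on which $\VV_\QQ$ trivializes. Its countable set of constant rational sections $\lambda$ each cut out a closed complex analytic subvariety $Z_\lambda \subset U$ as the vanishing locus of the image of $\lambda$ under $\cV^\an|_U \to (\cV^\an/F^0\cV^\an)|_U$; the countable union of the $Z_\lambda$ is then the intersection of $\HL(S,\VV)$ with $U$, and in $\cV^\an|_U$ we obtain the corresponding description of $\Hod(\cV)$. Globally, the monodromy action of $\pi_1(S^\an)$ groups these local pieces into orbits: for a rational class $\lambda$ with (possibly infinite) monodromy orbit, the collection of local analytic pieces arising from the orbit descends to a single closed irreducible analytic subvariety of $S^\an$. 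Summing over the countable set of monodromy orbits yields $\HL(S,\VV)$ as a countable union of closed irreducible analytic subvarieties of $S^\an$, and similarly for $\Hod(\cV) \subset \cV^\an$.

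The core step is algebraicity. Following Cattani--Deligne--Kaplan, I would fix a smooth projective compactification $\ol{S}$ of $S$ with $D = \ol{S} \setminus S$ a simple normal crossings divisor, and study the Hodge filtration near $D$ using Schmid's nilpotent orbit theorem together with the several-variable $\SL_2$-orbit theorem of Cattani--Kaplan--Schmid. In local coordinates with $D = \{z_1 \cdots z_k = 0\}$, the period map on the universal cover is approximated to high order by nilpotent orbits $\exp(\sum_i z_i N_i) \cdot F_\infty$ with $N_i$ the monodromy logarithms, and the $\SL_2$-orbit theorem gives sharp estimates on the Hodge metric and on the norms of integral classes relative to the limit mixed Hodge structure. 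The central finiteness to prove is: for every compact $K \subset \ol{S}$, only finitely many monodromy orbits of rational Hodge classes produce an analytic component of $\HL(S,\VV)$ meeting $K \cap S^\an$, and the closure in $\ol{S}$ of each such component remains closed analytic. Chow's theorem then promotes each component to an algebraic subvariety of $\ol{S}$, and restriction to $S$ proves the theorem.

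The main obstacle is the boundary analysis. A rational class which is Hodge on the open part of $S$ need not be of ``Hodge type'' with respect to the limiting mixed Hodge structure on a boundary stratum, and conversely; translating between these two regimes requires the $\SL_2$-splittings of the monodromy weight filtrations along each stratum of $D$ and delicate comparisons between the Hodge metric on $S$ and the natural norms on nearby fibers. It is precisely these estimates that yield both the finiteness of monodromy orbits of Hodge classes over a fixed compact of $\ol{S}$ and the analyticity of the closures across $D$; the technical intricacy of this asymptotic analysis is the true heart of the theorem.
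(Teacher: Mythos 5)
The paper does not prove \Cref{CDK} — it cites it from \cite{CDK95} and refers to \cite{BKT} for a simplified proof using o-minimal geometry — so there is no in-paper proof to compare against. Your proposal is a reasonable outline of the original CDK strategy (local trivialization, gluing by monodromy orbits, asymptotic analysis at a normal-crossings boundary via the nilpotent orbit and $\SL_2$-orbit theorems, and Chow's theorem); the paper's cited alternative \cite{BKT} replaces most of the asymptotic estimates with definability of the period map in an o-minimal structure, which is a genuinely different and shorter route.

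There is however a genuine gap in the key intermediate claim. You assert that for every compact $K \subset \ol{S}$ only finitely many monodromy orbits of rational Hodge classes produce an analytic component of $\HL(S,\VV)$ meeting $K \cap S^\an$. This is false as stated: taking $K = \ol{S}$ would force $\HL(S,\VV)$ to be a \emph{finite} union of algebraic subvarieties, whereas it is only countable and typically infinite (e.g.\ CM points are dense in the modular curve, and each is its own component for a suitable $\VV$). The correct finiteness in Cattani--Deligne--Kaplan is stratified by a norm bound: for each $N$, the locus of pairs $(s,\lambda)$ with $\lambda$ an integral Hodge class at $s$ and $Q(\lambda,\lambda) \le N$ (the polarization pairing) is closed algebraic in $\cV$ and proper, hence finite, over $S$; the countable decomposition of $\Hod(\cV)$ and $\HL(S,\VV)$ arises by letting $N$ run over $\NN$. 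The $\SL_2$-orbit estimates are precisely what bounds the Hodge norm near the boundary in terms of the polarization norm, so omitting this bound also deletes the mechanism by which the asymptotic analysis enters. Relatedly, your sketch does not address the part of $(\star)$ asserting that the projection $\Hod(\cV) \to S$ is finite onto its image on each component; this too is where the norm bound is used.
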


From now on we do not distinguish a complex algebraic variety $X$ from its associated complex analytic
space $X^\an$, the meaning being clear from the context.
It will be convenient for us to work in the following more general tensorial setting.
Let $\VV^\otimes$ be the  infinite direct sum of $\ZZ$VHS $\bigoplus_{a, b
  \in  \NN} \VV^{\otimes a} \otimes (\VV^\vee)^{\otimes b}$, where
$\VV^\vee$ denotes the $\ZZ$VHS dual to $\VV$; and let $(\cV^\otimes,
F^\bullet)$ be the corresponding filtered algebraic vector bundle of
infinite rank. We denote by $\Hod(\cV^\otimes) \subset \cV^\otimes$ and $\HL(S,
\VV^\otimes) \subset S$ the corresponding locus of Hodge tensors and
the tensorial Hodge locus respectively. Thus $\HL(S, \VV^\otimes)$ is the subset of
points $s$ in $S^\an$ for which the Hodge structure $\VV_s$ admits
more Hodge {\em tensors}
than the very general fiber $\VV_{s'}$. \Cref{CDK} says that 
$\Hod(\cV^\otimes)$ and $\HL(S, \VV^\otimes)$ are countable unions of
closed irreducible subvarieties of $\cV^\otimes$ and $S$ respectively,
called {\em the special subvarieties of $\cV^\otimes$ and $S$ for $\VV$}.  We
refer to \cite{BKT} for a simplified proof of the statement for
$\HL(S, \VV^\otimes)$ using o-minimal geometry.

\subsection{Fields of definition of Hodge loci}

The question we attack in this paper is the relation between the field
of definition of the $\ZZ$VHS $\VV$ and the fields of definition of
the corresponding special subvarieties. 

\subsubsection{The geometric case}
Once again the geometric case again provides us with a motivation and
a heuristic. Suppose that $f:X \to S$ is defined
over a number field $L\subset \CC$. In that case one easily checks,
refining Weil's argument, that the Hodge conjecture implies, in addition to $(\star)$:

\medskip
\begin{tabular}{l|m{13cm}}
$(\star \star) $ & (a) each irreducible component of $\Hod(\cV)$,
                   respectively $\HL(S, \VV)$, is defined over a finite extension of $L$.

                   (b) each of the finitely many $\Gal(\oQ/L)$-conjugates of such a
component is again an irreducible component of $\Hod(\cV)$,
respectively $\HL(S, \VV)$.
\end{tabular}

\begin{rem}
Of course $(\star \star)$ for $\Hod(\cV)$ implies $(\star \star)$ for
$\HL(S, \VV)$, and is a priori strictly stronger.
\end{rem}

\begin{rem}
The full Hodge conjecture is not needed to expect $(\star \star)$ to hold.
As proven by Voisin \cite[Lemma 1.4]{V07}, the property $(\star \star)$ for $\Hod(\cV)$ is equivalent
to the conjecture that Hodge classes in the fibers of $f$ are
(de Rham) absolute Hodge classes. We won't use the notion of
absolute Hodge classes in this article and refer the interested reader
to \cite{CS} for a survey.
\end{rem}

\subsubsection{Variations of Hodge structure defined over a number field}

Let us now turn to general $\ZZ$VHS.

\begin{defi}
We say that a $\ZZ$VHS $\VV$ is {\em defined over a number field $L\subset \CC$} if 
$S$, $\cV$, $F^\bullet$ and $\nabla$ are defined over 
$L$: $S= S_K \otimes_K \CC$,
$\cV = \cV_K \otimes_K \CC$, $F^\bullet\cV = (F_{K}^\bullet
\cV_K) \otimes_K \CC$ and $\nabla = \nabla_K \otimes_K \CC$
with the obvious compatibilities.

\end{defi}

In the same way the property $(\star)$, which is
implied by the Hodge conjecture in the geometric case, was proven to
be true for a general $\ZZ$VHS, we expect the property $(\star \star)$, which is
implied by the Hodge conjecture in the geometric case, to hold true
for any $\ZZ$VHS $\VV$, namely:

\begin{conj} \label{conj1}
  Let $\VV$ be a $\ZZ$VHS defined over a number field $L \subset
  \CC$. Then:
  \begin{enumerate}
\item[(a)]any special subvariety of $\cV^\otimes$, resp. $S$, for $\VV$ is
defined over a finite extension of $L$. 
\item[(b)] any of the finitely many $\Gal(\oQ/L)$-conjugates of a
  special subvariety of $\cV^\otimes$, resp. $S$, for $\VV$ is a
  special subvariety of $\cV^\otimes$, resp. $S$, for $\VV$.
  \end{enumerate}
\end{conj}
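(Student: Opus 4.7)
The plan is to exploit that, since $\VV$ is defined over $L$, the algebraic data $(S, \cV, F^\bullet, \nabla)$ carries a natural $L$-rational structure, so $\Gal(\oQ/L)$---and more generally $\Aut(\CC/L)$---acts on closed algebraic subvarieties of $S$ and on algebraic subbundles of $\cV^\otimes$ compatibly with Hodge filtration and connection. The content of the conjecture is that this Galois action is also compatible with the integral structure $\VV$, which is a priori transcendental, being a topological $\ZZ$-local system on $S^{\an}$.

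I would first reduce (a) to (b). If one assumes the strengthened form of (b) in which $\sigma$ ranges over all of $\Aut(\CC/L)$, then (a) follows by a countability argument: a special subvariety not defined over $\oQ$ would have an uncountable $\Aut(\CC/L)$-orbit of conjugate special subvarieties, contradicting the countability from \Cref{CDK}; the finiteness of the field of definition over $L$ is then automatic since any algebraic subvariety of a finite-type $L$-scheme is cut out by finitely many polynomials with coefficients in a finitely generated extension of $L$. Thus the content of \Cref{conj1} reduces to showing that the Galois action on the algebraic data preserves the set of special subvarieties, or equivalently, sends Hodge tensors to Hodge tensors.

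To establish this, let $Y$ be a special subvariety and let $\WW \subset \cV^\otimes|_Y$ be the flat subbundle generated by the Hodge tensors singling $Y$ out. Being defined by $\nabla$-closure, $\WW$ is in fact an algebraic subbundle of $\cV^\otimes|_Y$, so for $\sigma \in \Aut(\CC/L)$ the transport $\sigma_*\WW \subset \cV^\otimes|_{\sigma(Y)}$ is well-defined, algebraic, flat, and sits inside $F^0$ by Galois-equivariance of the Hodge filtration. I would then attempt a Principle~B style argument to show that $\sigma_*\WW$ admits a $\QQ$-structure inherited from $\VV^\otimes$: by flatness this is a pointwise condition which propagates along $\sigma(Y)$ once verified at a single point.

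The main obstacle is precisely this last step: a priori $\sigma_*\WW$ is only an algebraic subbundle sitting in $F^0$, and nothing formal forces its fibers into the rational lattices $\VV^\otimes_{\sigma(s),\QQ}$. Bridging this gap is essentially the content of the (de Rham) absolute Hodge conjecture, open in general. A more realistic partial target---in the spirit of the weakly non-factor condition announced in the abstract---is to work with special subvarieties $Y$ that can be characterized purely algebraically, for example as maximal loci where the algebraic monodromy of $(\cV,\nabla)$ drops to a prescribed subgroup. Since algebraic monodromy is $\Gal(\oQ/L)$-invariant, any such $Y$ is forced to lie in a finite, Galois-stable family of algebraic subvarieties, yielding (a) and (b) simultaneously without needing a pointwise absolute Hodge statement.
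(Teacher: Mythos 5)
The statement you were asked to prove is \Cref{conj1}, which the paper explicitly states as a \emph{conjecture} and does not prove; the paper only establishes partial results in its direction (\Cref{main}, \Cref{cor1}, \Cref{cor2}). Your proposal correctly recognizes this: you do not claim a proof, you identify the precise obstruction, and you pivot to a weaker target. That assessment is accurate. Your reduction of (a) to the $\Aut(\CC/L)$-version of (b) via the countability of special subvarieties is exactly the paper's argument in the proof of \Cref{main} in the case where $\VV$ is defined over a number field, so that part is on target. Your diagnosis that the gap in the ``Principle B'' step is precisely de Rham absolute Hodge also matches the paper's discussion (cf.\ the remark citing Voisin's Lemma 1.4).

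Where your final paragraph falls short is in the partial result it sketches. Working with ``maximal loci where the algebraic monodromy of $(\cV,\nabla)$ drops to a prescribed subgroup'' describes the \emph{weakly special} subvarieties (\Cref{ws}), and the crucial point is that these are \emph{not} automatically rigid: as the paper explains after \Cref{ws}, the decomposition $\G_Y^\ad = \HH_Y^\ad \times {\G'}_Y^\ad$ produces continuous families $\Phi_S^{-1}(wX_Y \times \{x'\})$, $x' \in X'_Y$, of weakly special subvarieties with the same algebraic monodromy group, so there are \emph{uncountably} many of them and a generic member is not defined over $\oQ$. Your claim that ``any such $Y$ is forced to lie in a finite, Galois-stable family'' is therefore false without an extra hypothesis. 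The missing ingredient is the \emph{weakly non-factor} condition of \Cref{non-factor}, which is precisely the rigidity constraint ruling out these deformation families; it is also what makes a weakly special subvariety automatically special (\Cref{weakly nf is special}). Moreover, the actual mechanism by which the paper proves $\oQ$-rationality under that hypothesis is not a direct invariance statement but a geometric one: the spread of $Y$ over the minimal field of definition, combined with the Goresky--MacPherson stratification theorem, forces $\HH_Y$ to be normal in $\HH_Z$ for $Z$ the $\oQ$-Zariski closure, and weak non-factoriality then forces $Y = Z$ (\Cref{monodromy}). Similarly, the statement that ``algebraic monodromy is $\Gal(\oQ/L)$-invariant'' needs to be made precise through the Deligne--Riemann--Hilbert equivalence $\MIC_r(S) \simeq \Loc_\CC(S^\an)$ and its compatibility with $\sigma$-twisting, which is the content of \Cref{twist}. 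So your strategic outline is correct in spirit but omits the hypothesis and the two technical arguments that make the partial result actually go through.
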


\begin{rem}
  Simpson conjectures that any $\ZZ$VHS defined
over a number field $L \subset \CC$ ought to be motivic: 
there should exist a $\oQ$-Zariski-open subset $U \subset S$ such that
the restriction of $\VV$ to $U$ is a direct factor of a geometric $\ZZ$VHS
on $U$, see \cite[``Standard conjecture'' p.372]{Si90}. Thus \Cref{conj1} would follow from Simpson's ``standard
conjecture'' and $(\star \star)$ in the geometric case. Of course
Simpson's standard conjecture seems unreachable with current techniques.
\end{rem}

\medskip
Let us mention the few results in the direction of \Cref{conj1} we are aware of:

\medskip
Suppose we are in the geometric situation of a morphism $f: X \to
S$ defined over $\QQ$.  In \cite[Theor. 0.6]{V07} (see also
\cite[Theor. 7.8]{Voisin2}), Voisin proves the following:

(1) for $\Hdg(\cV)$: let $Z \subset \cV$ is an irreducible component of
$\Hod(\cV)$ through a Hodge class $\alpha \in 
H^{2k}(X_0, \ZZ(k))_\prim$ such that the only constant sub-$\QQ$VHS
of the base change of $\VV_\QQ$ to $Z$ is $\QQ\cdot \alpha$. Then $Z$ is
defined over $\oQ$.

(2) for $\HL(S, \VV)$: under the weaker assumption that any
constant sub-$\QQ$VHS of the base change of $\VV_\QQ$ to $Z$ is purely
of type $(0,0)$, the irreducible component $p(Z)$ of $\HL(S, \VV)$ is defined over $\oQ$ and its 
$\Gal(\oQ/\QQ)$-translates are still special subvarieties of $S$ for
$\VV^\otimes$.

\medskip
\noi
In the case of a general $\ZZ$VHS Saito and Schnell \cite{SS}
prove:

(1) for $\Hod(\cV)$: if $\VV$ is defined over a number field then a
special subvariety of $\cV$ for $\VV$ is defined over $\oQ$ 
it contains a $\oQ$-point of $\cV$.

(2) for $\HL(S, \VV)$: without assuming that $\cV$ is defined over $\oQ$
but only assuming that $S$ is defined over a number field $L$, then a
special subvariety of $S$ for $\VV$ is defined over a finite extension
of $L$ if and only if it contains a $\oQ$-point of $S$.
This generalizes the well-known fact
that the special subvarieties of Shimura varieties are defined over $\oQ$
(as any special subvariety of a Shimura variety contains a CM-point,
and CM-points are defined over $\oQ$).

\begin{rem}
These results seem to indicate a significant gap in difficulty between \Cref{conj1} for
$\Hod(\cV)$ and \Cref{conj1} for $\HL(S, \VV)$. Saito and Schnell's
result (2), which only requires $S$ to be defined over $\oQ$, looks
particularly surprising. 
They also seem to indicate that the statement (b) in
\Cref{conj1} goes deeper than (a). In particular Saito and Schnell's
result (2) says nothing about Galois conjugates.
\end{rem}

\begin{rem}
Voisin's and Saito-Schnell's criteria look difficult
to check in practice. Even in explicit examples one usually knows very little about the
geometry of a special variety $Y$. In Voisin's case one would need to control the Hodge structure on
the cohomology of a smooth compactification of $X$ base-changed to
$Z$. In Saito-Schnell's case there is in general no natural
source of $\oQ$-points (like the CM points in the Shimura case).
\end{rem}

\subsection{Main results}
All results in this paper concern \Cref{conj1} for $\HL(S, \VV)$. We
provide a simple geometric criterion for a special 
subvariety of $S$ for $\VV$ to be defined over $\oQ$ and its Galois
conjugates to be special. 

\medskip
Let us first recall the notion of algebraic monodromy group.

\begin{defi} \label{mono}
Let $S$ be a smooth irreducible complex algebraic variety, let $k$ be a field and
$\VV$ a $k$-local system (of finite rank) on $S^\an$ (in our case $k$
will be $\QQ$ or $\CC$). Given an irreducible closed
subvariety $Y \subset S$, the algebraic monodromy group $\HH_Y$ of $Y$
for $\VV$ is the $k$-algebraic group connected component of the Tannaka group 
of the category $\langle \VV_{| Y^\norm} \rangle_{k\textnormal{Loc}}^\otimes$ of $k$-local systems on (the
normalisation of) $Y$ tensorially generated by the restriction of
$\VV$ and its dual.
\end{defi}
  
Equivalently $\HH_Y$ is the connected component of the Zariski-closure of the
monodromy $\rho: \pi_1(Y^{\norm,\an}) \to \GL(V_k)$ of the local system
$\VV_{| Y^\norm}$.

\begin{defi} \label{non-factor}
Let $S$ be a smooth irreducible complex algebraic variety and
$\VV$ a $k$-local system on $S^\an$. 
Let $Y \subset S$ be an irreducible closed subvariety.
We say that $Y$ is {\em weakly non-factor  for $\VV$} if it is not contained
in a closed irreducible $Z\subset S$ such that the $k$-algebraic monodromy
group $\HH_Y$ is a strict normal
subgroup of $\HH_Z$. We say that $Y$ is {\em positive dimensional  for
  $\VV$} if $\HH_Y \not = \{1\}$.
\end{defi}

\begin{rem} \label{extension of scalars1}
If $\VV$ is a $k$-local system on $S$, $Y \subset S$ is a closed
irreducible subvariety, and $k'$ is a field extension of $k$, the
$k'$-algebraic monodromy group $\HH_{Y}(\VV \otimes_k k')$ is the base
change $\HH_Y(\VV) \otimes_k k'$. Thus being weakly non-factor for
$\VV$ and  positive dimensional  for
  $\VV$ is equivalent to being weakly non-factor for
$\VV \otimes_k k'$ and  positive dimensional  for
$\VV \otimes_k k'$ respectively.
\end{rem}

Our main result in this paper is the following:
\begin{theor} \label{main}
Let $\VV$ be a polarized variation of $\ZZ$-Hodge structure on a smooth
quasi-projective variety $S$. 
\begin{itemize}
  \item[(a)] if $S$ is defined over a number field $L$ then any
    special subvariety of $S$ for $\VV$ which is weakly non-factor for $\VV_\QQ$
    is defined over a finite extension of $L$;
    \item[(b)] if moreover $\VV$ is defined over $L$ then the finitely many
      $\Gal(\oQ/L)$-translates of such a special subvariety are also
      special, weakly non-factor subvarieties of $S$ for $\VV$.
      \end{itemize}
\end{theor}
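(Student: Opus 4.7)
The plan is to show that the $\Gal(\oQ/L)$-orbit of a weakly non-factor special subvariety $Y \subset S$, viewed as a point of a suitable Hilbert scheme of $S$ over $L$, is countable; since $\Aut(\CC/L)$ acts with uncountable orbits on any $\CC$-point that is not $\oQ$-rational, this will force $Y$ to be defined over a finite extension of $L$ and yield (a). For (b), I then argue that each conjugate $\sigma(Y)$ is itself special and weakly non-factor by exhibiting it as the output of a Galois-equivariant construction.

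First I would set up an algebraic characterization of $Y$ using the Gau\ss--Manin data $(\cV, F^\bullet, \nabla)$. The weakly non-factor hypothesis says that $\HH_Y$ is maximal (for the normality relation) among algebraic monodromy groups of closed irreducible subvarieties of $S$ containing $Y$. Combined with the definition of a special subvariety, this ought to pin down $Y$ as an irreducible component of a closed algebraic locus in $S$ defined purely in terms of the existence of a specific type of flat sub-object of the Gau\ss--Manin system --- typically a $\nabla$-flat isotypical subbundle $\cF \subset \cV^\otimes|_{Y^\norm}$ for the reductive action of a quotient of $\HH_Y$, carrying a polarizable sub-$\QQ$VHS of prescribed Hodge type. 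Since $\cV^\otimes$ has only countably many algebraic subbundles defined over $\oQ$, the collection of such loci, as $\cF$ and the Hodge type vary, is countable and $\Gal(\oQ/L)$-stable.

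Next I would apply Galois conjugation. For $\sigma \in \Gal(\oQ/L)$, the conjugate $\sigma(Y)$ is again an irreducible closed subvariety of $S$ and must, by the equivariance of the above construction, be an irreducible component of a locus of the same form. In case (b), the $\ZZ$-structure of $\VV$ is Galois-stable, so $\sigma(Y)$ carries a sub-$\ZZ$VHS of the prescribed type, hence is again a special --- and, by stability of the normality condition under $\sigma$, weakly non-factor --- subvariety for $\VV$. In case (a), where only $S$ is assumed defined over $L$, one still gets that $\sigma(Y)$ lies in the same countable collection, yielding countability of the $\Gal(\oQ/L)$-orbit of $Y$ and hence finiteness.

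The main obstacle will be turning the group-theoretic weakly non-factor condition into the concrete algebraic cut-out statement sketched above: one must show, using the structure theory of Mumford--Tate groups together with Theorem~\ref{CDK} and, most likely, recent algebraicity/o-minimality results for period maps, that the normality constraint on $\HH_Y$ forces $Y$ itself --- and not a strictly larger subvariety containing $Y$ --- to be the irreducible component of the relevant flatness locus. A secondary difficulty in part (a) is that, absent an $L$-rational structure on $\VV$, the collection of candidate subbundles $\cF$ must be produced in a Galois-stable manner without reference to the $\QQ$- or $\ZZ$-structure of $\VV$; this will probably force one to rely only on the $L$-algebraic structure of $(\cV, \nabla)$ (after a controlled finite extension of $L$) and to invoke Theorem~\ref{CDK} uniformly in the possible $\ZZ$-structures compatible with the $\CC$-analytic one.
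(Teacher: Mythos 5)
Your proposal does not follow the paper's argument, and the two ideas you leave as ``obstacles'' are in fact the entire content of the theorem; as written there is a genuine gap rather than a different route to the same result.

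For part (a) the paper does not argue via countability of Galois orbits at all --- indeed it cannot, since only $S$ (not $\VV$) is assumed defined over $L$, so there is no Galois-stable collection of loci to work with, a difficulty you yourself flag but do not resolve. Instead the paper forms the $\oQ$-Zariski closure $Z$ of $Y$ and reduces, after shrinking to the smooth locus, to showing that if $Y$ is weakly non-factor and $\oQ$-Zariski-dense in $S$ (with $S$ now over $\oQ$) then $\HH_Y = \HH_S$, whence $Y = S$ since $Y$ is weakly special. That proposition is proved by taking a spread $\cY \to T$ of $Y$ defined over $\oQ$ and invoking the Thom--Mather/Whitney stratification theorem: after passing to the open stratum, $\pi\colon S \to T$ is a topological fibre bundle, so $\pi_1(Y^\an)$ maps onto a normal subgroup of $\pi_1(S^\an)$ and hence $\HH_Y \trianglelefteq \HH_S$; the weakly non-factor hypothesis then forces equality. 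Nothing resembling this spread-plus-stratification argument appears in your proposal, and it is precisely what replaces your hoped-for ``algebraic cut-out'' of $Y$ by flat subbundles --- a cut-out you correctly identify as unclear how to produce, especially without an $L$-structure on $\VV$.

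For part (b) the paper's mechanism is the Deligne--Riemann--Hilbert equivalence between regular algebraic connections and complex local systems, upgraded to a tensor equivalence, together with the Tannakian description of $\HH_Y$: twisting by $\sigma \in \Aut(\CC/L)$ induces a tensor equivalence on the categories of local systems restricted to $Y$ and $Y^\sigma$, hence a canonical isomorphism $\HH_Y \simeq \HH_{Y^\sigma}$, and likewise identifies the normal closures of $\HH_Y$ in $\HH_S$ and of $\HH_{Y^\sigma}$ in $\HH_{S^\sigma}$; this gives Galois-stability of both the weakly special and the weakly non-factor property without ever choosing a distinguished flat subbundle $\cF$. Your proposal gestures at ``Galois-equivariance of the construction'' but the construction itself is never specified, and it is exactly that specification which the Tannakian/Riemann--Hilbert argument supplies. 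In short, both halves of your plan stop exactly where the proof begins.
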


As a first corollary we obtain \Cref{conj1} for maximal strict
special subvarieties of $S$ under a simplicity assumption on the generic
Mumford-Tate group:

\begin{cor} \label{cor1}
Let $\VV$ be a polarized variation of $\ZZ$-Hodge structure on a smooth
quasi-projective variety $S$, whose adjoint generic Mumford-Tate group
$\G_S^\ad$ is simple. Then:
\begin{itemize}
  \item[(a)] if $S$ is defined over a number field $L$ then 
any strict special subvariety $Y \subset S$ for $\VV$, which is positive
dimensional for $\VV$ and maximal for
these properties, is defined over $\oQ$.

\item[(b)] if $\VV$ is moreover defined over $L$ then the finitely many
      $\Gal(\oQ/L)$-translates of such a special subvariety  are special
      subvarieties of $S$ for $\VV$.
      \end{itemize}
\end{cor}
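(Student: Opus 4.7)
The plan is to deduce Corollary~\ref{cor1} from Theorem~\ref{main} by checking that any strict special subvariety $Y \subset S$ for $\VV$ which is positive-dimensional and maximal for these properties is automatically weakly non-factor for $\VV_\QQ$; once this is established, (a) and (b) are immediate consequences of Theorem~\ref{main}(a) and (b) respectively.

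To verify the weakly non-factor property I would argue by contradiction. Suppose there is a closed irreducible $Z$ with $Y \subset Z \subset S$ and $\HH_Y$ a strict normal $\QQ$-algebraic subgroup of $\HH_Z$, and let $Z^{\textnormal{sp}}$ denote the smallest special subvariety of $S$ containing $Z$ (it exists because the collection of special subvarieties through $Z$ is non-empty and countable by Theorem~\ref{CDK}). Functoriality of algebraic monodromy under inclusions gives $\HH_Z \subset \HH_{Z^{\textnormal{sp}}}$, and by Andr\'e's theorem $\HH_{Z^{\textnormal{sp}}}$ is a normal subgroup of $\MT(Z^{\textnormal{sp}})^\der = \MT(Z)^\der$. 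I would then split into three cases. If $Z^{\textnormal{sp}} = Y$ then $Z = Y$, contradicting the strict inclusion $\HH_Y \subsetneq \HH_Z$. If $Y \subsetneq Z^{\textnormal{sp}} \subsetneq S$, the maximality of $Y$ among strict positive-dimensional special subvarieties forces $\HH_{Z^{\textnormal{sp}}} = \{1\}$, so $\HH_Y \subset \HH_Z \subset \HH_{Z^{\textnormal{sp}}} = \{1\}$, contradicting $\HH_Y \neq \{1\}$. If $Z^{\textnormal{sp}} = S$, then $\MT(Z) = \G_S$ and $\HH_Z$ is normal in $\G_S^\der$; its image in the simple group $\G_S^\ad$ is either trivial or everything. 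The trivial case places $\HH_Z$ in the finite centre of $\G_S^\der$ and forces $\HH_Z = \{1\}$ by connectedness, contradicting $\HH_Y \neq \{1\}$. Otherwise $\HH_Z \to \G_S^\ad$ is surjective with finite kernel, forcing $\HH_Z = \G_S^\der$ by a dimension comparison; applying the same dichotomy to the strict connected normal subgroup $\HH_Y \triangleleft \G_S^\der$ then gives $\HH_Y = \{1\}$, the final contradiction.

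The main obstacle I foresee is not in the algebraic dichotomy itself, which is a clean application of Andr\'e's normality theorem together with the simplicity of $\G_S^\ad$, but rather in organising the functorial inputs: existence of the smallest special subvariety through an arbitrary $Z$, the inclusion $\HH_Z \subset \HH_{Z^{\textnormal{sp}}}$ (which needs care given that the definition of $\HH_\bullet$ goes through normalisations), and the applicability of Andr\'e's theorem to arbitrary, not necessarily special, subvarieties. These should all be standard consequences of Theorem~\ref{CDK} and the theory of polarized variations of Hodge structure, after which the corollary reduces entirely to Theorem~\ref{main}.
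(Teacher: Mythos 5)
Your proof is correct and takes essentially the same route as the paper's: reduce to showing that $Y$ is weakly non-factor, pass to the special closure $\langle Z\rangle_{\textnormal{s}}$, invoke Andr\'e's normality theorem for the algebraic monodromy group inside $\G^{\der}$, and use simplicity of $\G_S^{\ad}$ to pin down $\HH_Z$ and then $\HH_Y$. Your version is slightly more explicit (you spell out the three cases for $Z^{\textnormal{sp}}$ and the dichotomy in $\G_S^{\ad}$, whereas the paper jumps directly from maximality of $Y$ to $\langle Z\rangle_{\textnormal{s}}=S$, leaving the reader to observe that $\langle Z\rangle_{\textnormal{s}}$ is positive-dimensional because $\HH_Z\supset\HH_Y\neq\{1\}$), but the underlying argument is identical.
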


\Cref{main} also enables to reduce the \Cref{conj1}(a) for $\HL(S,
\VV)$ to the case of points:
\begin{cor} \label{cor2} ~
Special subvarieties for $\ZZ$VHSs defined over $\oQ$ are
defined over $\oQ$ if and only if it holds true for special
points.
\end{cor}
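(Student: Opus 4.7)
The forward direction is trivial: special points are precisely the zero-dimensional special subvarieties. For the reverse direction, we assume that every special point of every $\ZZ$VHS defined over $\oQ$ is itself defined over $\oQ$; we fix a $\ZZ$VHS $\VV$ on $S$ with both defined over $\oQ$; and we let $Y \subset S$ be a special subvariety for $\VV$. We argue by induction on $\dim Y$. The base case $\dim Y = 0$ is the hypothesis. For $\dim Y \geq 1$, if $Y$ is weakly non-factor for $\VV_\QQ$, then \Cref{main}(a) directly delivers the conclusion. Otherwise, by \Cref{non-factor} there is a closed irreducible $Z \supsetneq Y$ with $\HH_Y \lhd \HH_Z$ a proper normal subgroup.

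The plan is then to pick $Z$ maximal with this property. Using the reductivity of algebraic monodromy groups of polarizable $\ZZ$VHSs -- so that the lattice of normal subgroups is controlled by the almost-direct product decomposition into simple factors and centre -- one verifies that such a maximal $Z$ is itself weakly non-factor, and hence is defined over $\oQ$ by \Cref{main}(a). Replacing $S$ by $Z$, we reduce to the situation where $\HH_Y \lhd \HH_S$ is a proper normal subgroup, $S$ and $\VV$ are defined over $\oQ$, and $S$ is weakly non-factor. The normal decomposition $\HH_Y \lhd \HH_S$ should then induce, after passing to a suitable finite étale cover of $S$ defined over $\oQ$, a Hodge-theoretic fibration $\pi: \wt S \to T$ defined over $\oQ$ whose target $T$ is a smooth quasi-projective $\oQ$-variety carrying an induced $\ZZ$VHS $\WW$ of algebraic monodromy $\HH_S/\HH_Y$. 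The fibers of $\pi$ are translates of (a lift of) $Y$, and the point $t := \pi(Y) \in T$ is a special point of $T$ for $\WW$. The standing hypothesis applied to the $\oQ$-defined pair $(T, \WW)$ then forces $t \in T(\oQ)$, from which $Y$, realized up to the finite cover as $\pi^{-1}(t)$, is defined over $\oQ$.

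The main obstacle is the construction of the fibration $\pi: \wt S \to T$ over $\oQ$. One needs to split $\HH_S$ (up to isogeny) as an almost-direct product $\HH_Y \cdot N$ using the semisimplicity of $\HH_S$, to globalize this splitting to a genuine quotient morphism on a finite étale cover of $S$, to verify that the output base $T$ carries an induced $\ZZ$VHS $\WW$ defined over $\oQ$ with algebraic monodromy exactly $\HH_S/\HH_Y$, and to identify $t = \pi(Y)$ as a genuine special point of $T$ for $\WW$ -- rather than, say, merely a Hodge-generic point of some proper special subvariety of $T$. The $\Gal(\oQ/\QQ)$-equivariance of the entire construction follows from the $\oQ$-structure on $S$ and $\VV$, but ensuring that the fibration descends cleanly to $\oQ$ is the technical heart of the argument. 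A subsidiary obstacle, requiring some care with reductive subgroup structure, is the verification that the maximal $Z$ chosen above is indeed weakly non-factor.
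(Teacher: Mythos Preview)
Your overall strategy coincides with the paper's: reduce to the situation where $\HH_Y$ is a proper normal subgroup of $\HH_S$, produce a $\oQ$-fibration of $S$ over a base $T$ carrying an induced $\ZZ$VHS defined over $\oQ$, identify the image of $Y$ as a special point of $T$, and invoke the hypothesis. Your reduction step differs from the paper's --- you pick a maximal $Z$ with $\HH_Y \lhd \HH_Z$ and argue (correctly, via semisimplicity of monodromy and transitivity of the ``product of simple factors'' relation) that such $Z$ is weakly non-factor, hence weakly special by maximality, hence special by \Cref{weakly nf is special}, hence $\oQ$-defined by \Cref{main}(a). The paper instead passes directly to the $\oQ$-Zariski closure of $Y$ and uses the spread argument of \Cref{monodromy} to obtain $\HH_Y \lhd \HH_S$. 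Both routes reach the same reduced situation. (Your induction on $\dim Y$ is never actually used: in the inductive step you appeal only to the standing hypothesis on special points, not to the case of smaller-dimensional special subvarieties.)

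The genuine gap is precisely the one you identify as ``the technical heart'': constructing the fibration over $\oQ$. A finite \'etale cover will not do this --- the Hodge-theoretic product decomposition lives on the period domain side, and there is no a priori reason for the analytic fibration it induces on $S$ to be algebraic, let alone defined over $\oQ$. The missing ingredient is the theorem of Bakker--Brunebarbe--Tsimerman \cite{BBT} (o-minimal GAGA and Griffiths' conjecture): any period map factors as a \emph{proper algebraic} morphism $q: S \to B$ followed by a quasi-finite period map, and $q$ is automatically defined over $\oQ$ when $S$ is. The paper applies this not to $\VV$ but to the auxiliary $\ZZ$VHS $\VV' := (\bigoplus_i \VV^{\otimes a_i} \otimes (\VV^\vee)^{\otimes b_i})^{\HH_Y}$, whose period map collapses $Y$ to a point; then $Y$ is an irreducible component of a fibre of $q$. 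Two further lemmas are needed to ensure the descended $\ZZ$VHS on $B$ is itself definable over $\oQ$: that a sub-$\ZZ$VHS of a $\oQ$-definable $\ZZ$VHS can be given a compatible $\oQ$-structure (a Wedderburn--Malcev argument on idempotents, \Cref{subvariation}), and that this $\oQ$-structure pushes forward along the proper map $q$ via the projection formula (\Cref{descent}). Without \cite{BBT} there is no known mechanism to algebraize the fibration, and without \Cref{subvariation} and \Cref{descent} the $\oQ$-structure on the target $\ZZ$VHS --- needed to apply the hypothesis --- is not available.
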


\section{$\ZZ$VHS versus local systems, Mumford-Tate group versus
  monodromy, special versus weakly special}

In this section we recall the geometric background providing the
intuition for \Cref{main}, namely the geometry of special subvarieties and
their generalization, the weakly special subvarieties. We refer to \cite{klin} and \cite{KO} for details. 

\medskip
Let $\VV_\QQ$ be a $\QQ$-local system on $S$ and $Y \subset S$ an irreducible
closed subvariety. In \Cref{mono} we recalled
the definition of the algebraic monodromy group $\HH_Y$ for
$\VV_\QQ$. Suppose now that $\VV_\QQ$ underlies 
a $\ZZ$VHS $\VV$ over $S$. In addition to $\HH_Y$, which depends only on the
underlying local system, one attaches a more subtle
invariant to $Y$ and $\VV$: the generic Mumford-Tate group $\G_Y$
i.e. the Tannaka group of the category $\langle 
\VV_{| Y^\norm} \rangle_{\QQ \textnormal{VHS}}^\otimes$ of $\QQ$VHS on the
normalisation of $Y$ tensorially generated by the restriction of $\VV$
and its dual. This group is usually much harder to compute than $\HH_Y$ as its definition is
not purely geometric. The $\ZZ$VHS $\VV$ is completely described by
its complex analytic period map
$\Phi_S: S^\an  \to X_S:= \Gamma
\backslash \cD_S $.
Here $\cD_S$ denotes the Mumford-Tate domain associated to the
generic Mumford-Tate group $\G_S$ of
$(S, \VV)$, $\Gamma_S \subset \G_S(\QQ)$ is an arithmetic
lattice and the complex analytic quotient $X_S$ is called the Hodge
variety associated to $\VV$. The special subvarieties of the Hodge
variety $X_S$ and their generalisation, the weakly special
subvarieties of $X_S$ are defined purely in group-theoretic terms,
see \cite[Def. 3.1]{KO}. One proves that the special subvarieties of
$S$ for $\VV$ are precisely the
irreducible components of the $\Phi_S$-preimage of the special
subvarieties of $X_S$, thus obtaining the following characterization,
see \cite[Def. 1.2]{KO}. 

\begin{prop} \label{special}
  Let $\VV$ be a $\ZZ$VHS on $S$. 
A {\em special subvariety} of $S$ for $\VV$ is a closed irreducible
algebraic subvariety $Y \subset S$ maximal among the closed
irreducible algebraic subvarieties of $S$ with generic
Mumford-Tate group $\G_Y$.
\end{prop}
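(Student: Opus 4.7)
The plan is to deduce this characterization from the fact, recalled in the paragraph preceding the proposition, that special subvarieties of $S$ for $\VV$ are precisely the irreducible components of $\Phi_S^{-1}$ of the group-theoretically defined special subvarieties of $X_S$. Two standard ingredients enter: the upper semicontinuity of the generic Mumford-Tate group (if $Y \subset Z$ are irreducible closed in $S$, then after suitable $\G_S(\QQ)$-conjugation $\G_Y \subset \G_Z$, because for a very general $s \in Y$ one has $\MT_s = \G_Y$ and simultaneously $\MT_s \subset \G_Z$), and \Cref{CDK}, which ensures that for any sub-Mumford-Tate datum $(\MM, \cD_\MM) \subset (\G_S, \cD_S)$ the locus $\Phi_S^{-1}(\Gamma_\MM \backslash \cD_\MM) \subset S$ is a countable union of closed irreducible algebraic subvarieties, each of them special with generic Mumford-Tate group a $\G_S(\QQ)$-conjugate of $\MM$.

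For the forward implication, assume $Y$ is special for $\VV$, so $Y$ is an irreducible component of $\Phi_S^{-1}(\Gamma_\MM \backslash \cD_\MM)$ for some sub-Mumford-Tate datum $(\MM, \cD_\MM)$, and $\G_Y = \MM$ by construction. If $Z \supset Y$ is irreducible closed with $\G_Z = \G_Y = \MM$, then at a very general $z \in Z$ one has $\MT_z = \MM$, so $\Phi_S(z) \in \Gamma_\MM \backslash \cD_\MM$; by horizontality and connectedness this extends to all of $Z$, giving $Z \subset \Phi_S^{-1}(\Gamma_\MM \backslash \cD_\MM)$. Since $Z$ is irreducible and contains $Y$, which is already a full irreducible component of this preimage, we conclude $Z = Y$, proving the maximality of $Y$.

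Conversely, suppose $Y$ is maximal among irreducible closed subvarieties of $S$ with generic Mumford-Tate group $\G_Y =: \MM$. By the second ingredient above, the locus $\Phi_S^{-1}(\Gamma_\MM \backslash \cD_\MM)$ is a countable union of special subvarieties of $S$; let $Y'$ be the irreducible component of this locus containing $Y$. Then $Y'$ is special with $\G_{Y'} = \MM = \G_Y$, so maximality forces $Y = Y'$ and $Y$ is special. The only real content beyond unpacking definitions is managing the $\G_S(\QQ)$-conjugation ambiguity inherent in the generic Mumford-Tate group so that $\G_Y$, $\G_Z$, and the ambient $\MM$ can be compared as literal subgroups of $\G_S$; this is the main bookkeeping obstacle and is handled by fixing a basepoint in $Y$ and tracking the induced compatibilities throughout.
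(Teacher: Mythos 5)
The paper does not actually prove this proposition: it is stated as a recalled characterization with a citation to \cite[Def.~1.2]{KO}, and the preceding paragraph only asserts, without argument, that the special subvarieties of $S$ for $\VV$ are the irreducible components of the $\Phi_S$-preimages of the group-theoretically defined special subvarieties of $X_S$. There is thus no in-paper proof to compare against; your argument reconstructs what the paper defers to \cite{KO}.

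The reconstruction is essentially correct and is the natural one given the recalled framework. Two small points are worth sharpening. Your blanket claim that every irreducible component of $\Phi_S^{-1}(\Gamma_\MM \backslash \cD_\MM)$ has generic Mumford--Tate group a conjugate of $\MM$ is not quite right: a component can land inside a strictly smaller sub-Mumford--Tate domain and have strictly smaller generic Mumford--Tate group. You do not need the blanket statement, since for the specific component $Y'$ containing $Y$ you get the sandwich $\MM = \G_Y \subset \G_{Y'} \subset \MM$, the left inclusion from semicontinuity along $Y \subset Y'$ and the right from $\Phi_S(Y') \subset \Gamma_\MM \backslash \cD_\MM$. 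And in the forward direction, the inference from ``$\MT_z = \MM$ for very general $z \in Z$'' to ``$\Phi_S(Z) \subset \Gamma_\MM \backslash \cD_\MM$'' genuinely uses that $Z$ is connected and contains $Y$, which already maps into the fixed domain $\cD_\MM$: a priori the Hodge structures along $Z$ whose Mumford--Tate group is $\MM$ could lie in a different $\MM(\RR)^+$-orbit inside $\cD_S$. You acknowledge both issues under the heading of conjugation bookkeeping and ``horizontality and connectedness,'' and that is indeed where the content of the argument lives; it would be worth writing those two steps out explicitly rather than gesturing at them.
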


Similarly, one defines a generalisation of the special subvarieties of
$X_S$, the so-called {\em weakly special} subvarieties of $X_S$, purely
in group-theoretic terms see \cite[Def. 3.1]{KO}. The weakly special
subvarieties of $S$ for $\VV$, which generalize the special ones, are
defined as the irreducible
components of the $\Phi_S$-preimage of the weakly special
subvarieties of $X_S$. Again one obtains the following characterization,
see \cite[Cor. 3.14]{KO}:

\begin{prop} \label{weakly special}
  Let $\VV$ be a $\ZZ$VHS on $S$.
A {\em weakly special subvariety} $Y\subset S$ for $\VV$ is a closed
irreducible algebraic subvariety $Y$ of $S$ maximal among the closed
irreducible algebraic subvarieties of $S$ with algebraic monodromy
group $\HH_Y$.
\end{prop}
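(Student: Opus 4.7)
The plan is to reduce the statement to the analogous group-theoretic characterization of weakly special subvarieties of the Hodge variety $X_S$, and then to transport that characterization back to $S$ via the period map $\Phi_S$. The essential input is the theorem of Deligne--André: for any closed irreducible subvariety $Z \subset S$, the algebraic monodromy group $\HH_Z$ is a \emph{normal} subgroup of the derived Mumford--Tate group $\G_Z^{\der}$. This normality is precisely what connects $\HH_Z$ to the group-theoretic data parametrizing weakly special subvarieties of $X_S$.

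First I would unpack the definition: by construction a weakly special subvariety of $S$ for $\VV$ is an irreducible component $Y$ of $\Phi_S^{-1}(W)$ for some weakly special $W \subset X_S$, and such $W$ are defined by pairs consisting of a normal $\QQ$-subgroup $N$ of a suitable $\G_Z^{\der}$ and a basepoint orbit datum. Using [KO, Def. 3.1] (or the analogue of the Moonen--Ullmo--Yafaev characterization in the Shimura setting), I would identify the group $N$ attached to the minimal weakly special $W$ containing $\Phi_S(Z)$ with $\HH_Z$ itself, up to the choice of a lift to $\cD_S$.

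With this identification in hand I would argue the two implications. For the forward direction, suppose $Y$ is weakly special, so $Y$ is a component of $\Phi_S^{-1}(W)$ with $W$ weakly special of type $(N,\cdot)$ and $\HH_Y = N$. If $Y \subsetneq Z$ with $\HH_Z = \HH_Y$, then $\Phi_S(Z)$ is contained in the smallest weakly special subvariety of $X_S$ through $\Phi_S(Y)$, which is exactly $W$; hence $Z \subset \Phi_S^{-1}(W)$, contradicting the maximality of $Y$ as an irreducible component. For the converse, given $Y$ maximal among closed irreducible subvarieties with monodromy $\HH_Y$, let $W$ be the smallest weakly special subvariety of $X_S$ containing $\Phi_S(Y)$, and let $Y'$ be the irreducible component of $\Phi_S^{-1}(W)$ containing $Y$. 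Then $Y'$ is weakly special by definition, and by the identification above $\HH_{Y'} = \HH_Y$; maximality of $Y$ forces $Y' = Y$, so $Y$ is weakly special.

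The main obstacle will be the identification $N = \HH_Y$ needed in both directions, i.e., verifying that the algebraic monodromy of an irreducible component of $\Phi_S^{-1}(W)$ coincides with the normal subgroup used to define $W$. This requires a careful application of the André--Deligne normality theorem together with an analysis of how taking irreducible components of the $\Phi_S$-preimage interacts with passage to normalizations and to connected components of preimages of Mumford--Tate orbits. The rest of the argument is a formal maximality chase once this dictionary has been set up, and is exactly what is carried out in [KO, Cor. 3.14].
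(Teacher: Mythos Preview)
The paper does not actually prove this proposition: it is quoted as a characterization established elsewhere, with the entire argument deferred to \cite[Cor.~3.14]{KO}. Your proposal correctly identifies this reference and sketches the argument one expects to find there --- reducing to the group-theoretic description of weakly special subvarieties of $X_S$ via the period map, using the Deligne--Andr\'e normality theorem to match the algebraic monodromy group $\HH_Y$ with the normal subgroup $N$ in the datum $(N,\cdot)$ defining the relevant weakly special $W\subset X_S$, and then running a maximality chase in both directions. This is the right shape for the argument, and you are right that the delicate point is the identification $\HH_{Y'}=N$ for $Y'$ a component of $\Phi_S^{-1}(W)$; the rest is formal. Since the paper itself offers nothing beyond the citation, there is no divergence in approach to report.
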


A posteriori \Cref{weakly special} offers an alternative definition of the weakly special subvarieties of
$S$ for a $\ZZ$VHS $\VV$. It is important for us to notice that this alternative definition of the
weakly special subvarieties of $S$ for $\VV$ makes sense for $\VV$ any
$k$-local system on $S^\an$, $k$ a field:

\begin{defi} \label{ws}
Let $k$ be a field and let $\VV$ be a $k$-local system on $S$. We
define a {\em weakly special subvariety} $Y\subset S$ for $\VV$ to be a closed 
irreducible algebraic subvariety $Y$ of $S$ maximal among the closed
irreducible algebraic subvarieties of $S$ with algebraic monodromy
group $\HH_Y$.
\end{defi}

\begin{rem} \label{extension of scalars2}
Following \Cref{extension of scalars1} $Y$ being weakly special for
$\VV$ is equivalent to $Y$ being weakly special for $\VV \otimes_k
k'$.
\end{rem}
  
\medskip
For $\VV$ a $\ZZ$VHS and $Y \subset S$ an
irreducible closed subvariety there exists a unique weakly special
subvariety $\langle Y\rangle_{\ws}$ with algebraic monodromy group
$\HH_Y$ and a unique special subvariety
$\langle Y\rangle_{\textnormal{s}}$ with generic
Mumford-Tate group $\G_Y$ containing $Y$, see \cite[2.1.4]{KO}:
$$ Y \subset \langle Y\rangle_{\ws} \subset \langle
Y\rangle_{\textnormal{s}} \subset S\;\;.$$
When $\VV$ is a mere local system there exists by definition a weakly
special subvariety with algebraic monodromy group $\HH_Y$ and containing
$Y$ but its uniqueness is not clear to us.

\medskip
Let us now recall that for $\VV$ a $\ZZ$VHS special subvarieties of $S$ for $\VV$ can be
thought of as families of weakly special subvarieties. Indeed let $Y\subset S$ be a weakly special subvariety. 
A fundamental result of Deligne-Andr\'e \cite[Theor.1]{An92} states
that the group $\HH_Y$ is normal in (the derived group 
of) $\G_Y$. Following
\cite[Prop. 2.13]{KO}, the decomposition $\G_Y^\ad = \HH_Y^\ad \times {\G'}_Y^\ad$
induces a product decomposition $X_Y = wX_Y \times X'_Y$, where $X_Y$
is the smallest special subvariety of $X_S$ containing $\Phi_S(Y)$ and
$Y$ is (an irreducible component of) $\Phi_S^{-1}(wX_Y \times \{x'_0\})$ for
a certain point $x' \in X'_Y$ and a weakly special subvariety $wX_Y$
of $X_S$. All the (irreducible components of) the
preimages $\Phi_S^{-1}(wX_Y \times \{x'\})$, $x' \in X'_Y$, are weakly
special subvarieties of $S$ for $\VV$ that can be thought as Hodge
theoretic deformations of $Y$. In particular, there are only countably many special
subvarieties of $S$ for $\VV$, while there are uncountably many weakly
special ones, organized in countably many ``product families''.

\medskip
We can now make a few remarks on the notion of {\em weakly non-factor}
subvarieties defined in \Cref{non-factor}:
\begin{enumerate}[leftmargin=*]

  \item For $\VV$ a local system a closed irreducible subvariety
        $Y\subset S$ is weakly non-factor if and only if any weakly
        special subvariety $Y \subset Z \subset S$ with $\HH_Z =
        \HH_Y$ is weakly non-factor. When $\VV$ is a $\ZZ$VHS it
        amounts to saying that the  weakly
        special closure $\langle Y\rangle_{\ws} \subset S$ is weakly non-factor.
        
        \item Let $\VV$ be a $\ZZ$VHS. Given a closed irreducible subvariety $Y
          \subset S$, let $wX_Y \subset X_S$ be the smallest
          weakly special subvariety containing $\Phi_S(Y)$. It follows
          from the above description of the weakly special
          subvarieties that $Y$ is weakly
          non-factor for $\VV$ if and only if there does not exist $Y \subset Z
          \subset S$, with $Z$ closed 
irreducible, such that $wX_Z= wX_Y \times wX' \subset X_S$ with $wX'$
a positive dimensional weakly special subvariety of $X_S$. The ``weakly  non-factor'' condition is thus a
Hodge theoretic rigidity of $Y$. In particular one obtains the
following:
 
\begin{lem} \label{weakly nf is special}
  Let $\VV$ be a $\ZZ$VHS on $S$. Any weakly non-factor, weakly special subvariety of
  $S$ is special.
  \end{lem}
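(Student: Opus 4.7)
The plan is to argue by contradiction, using $\langle Y\rangle_{\textnormal{s}}$ as the candidate enlargement forbidden by the weakly non-factor condition, and deriving the required normality of monodromy groups from Deligne--Andr\'e.

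Suppose $Y$ is weakly special and weakly non-factor but not special, and set $Z := \langle Y\rangle_{\textnormal{s}}$, so that $Y\subsetneq Z\subset S$ and $\G_Z = \G_Y$ by the very definition of the special closure. The first step is to compare $\HH_Y$ and $\HH_Z$. The inclusion $Y\subset Z$ induces a map on topological fundamental groups (of the normalisations) whose image lies in $\pi_1(Z^{\norm,\an})$, hence $\HH_Y\subset \HH_Z$. Moreover, since $Y$ is weakly special and $Y\subsetneq Z$, \Cref{weakly special} forces $\HH_Z\neq \HH_Y$, so the inclusion $\HH_Y\subsetneq \HH_Z$ is strict.

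The second step is to upgrade this strict inclusion to strict normality. By Deligne--Andr\'e's theorem, both $\HH_Y$ and $\HH_Z$ sit as normal subgroups inside $\G_Y^{\der} = \G_Z^{\der}$. Since $\HH_Z\subset \G_Y^{\der}$ and $\HH_Y$ is normal in $\G_Y^{\der}$, every element of $\HH_Z$ normalises $\HH_Y$; thus $\HH_Y$ is a strict normal subgroup of $\HH_Z$. This directly contradicts the hypothesis that $Y$ is weakly non-factor (applied with the chain $Y\subset Z$), and forces $Y = \langle Y\rangle_{\textnormal{s}}$, i.e.\ $Y$ is special.

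The only point requiring any care is the identity $\G_Z = \G_Y$ and the use of Deligne--Andr\'e (the latter is invoked for $\HH_Z$, which requires $Z$ itself to be weakly special or at least for the normality statement to apply; but $Z$ is special, hence in particular weakly special by the product decomposition recalled before the lemma). Apart from verifying these two structural points, which are already contained in the material cited from \cite{KO} and \cite{An92}, there is no further obstacle.
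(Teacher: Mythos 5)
Your proof is correct and follows essentially the same line as the paper's implicit argument: take $Z=\langle Y\rangle_{\textnormal{s}}$, observe that weak speciality forces $\HH_Y\subsetneq\HH_Z$, and use Deligne--Andr\'e to upgrade this to a strict normal inclusion, contradicting the weakly non-factor hypothesis. The paper packages the same content in the language of the product decomposition $X_Y = wX_Y\times X'_Y$ recalled just before the lemma (remark (2)), while you state the normality argument directly; these are two formulations of one argument.

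One small imprecision worth flagging: the phrase ``the inclusion $Y\subset Z$ induces a map on topological fundamental groups (of the normalisations)'' is not literally true in general --- the normalisation of $Y$ need not map to the normalisation of $Z$. The correct route is to pass through an irreducible component $\widetilde Y$ of $Y\times_Z Z^{\nor}$ dominating $Y$, whose normalisation maps both to $Y^{\nor}$ (finite surjective, hence finite-index on $\pi_1$) and to $Z^{\nor}$; since $\HH_Y$ and $\HH_Z$ are by definition the connected components of the Zariski closures, finite-index issues wash out and $\HH_Y\subset\HH_Z$ follows. This is standard and does not affect the argument, but the wording should be adjusted.
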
 

\item The terminology ``weakly non-factor'' generalizes the
      terminology ``non-factor'' introduced by Ullmo \cite{Ullmo} for
      special subvarieties of Shimura varieties.

\item For $\VV$ a non-isotrivial local system on $S$, it follows from the
        definition that for any weakly non-factor
        subvariety $Y \subset S$  the algebraic monodromy group
        $\HH_Y$ is non-trivial. When $\VV$ is moreover a $\ZZ$VHS this
        last condition is equivalent to saying that $Y$ is {\em positive
  dimensional for $\VV$} in the sense of \cite{KO}: its image $\Phi_S(Y)$
 is not a point. 
\end{enumerate}

Given $S$ a smooth complex quasi-projective variety and $\VV$ a
complex local system,  we say that $\VV$ is defined over a number
field $L\subset \CC$ if both $S$ and
the algebraic module with integrable connection $(\cV, \nabla)$
corresponding to $\VV$ under the Deligne-Riemann-Hilbert correspondence
(see (\ref{Deligne}) below) are defined over $L$. \Cref{main} then follows
immediately from \Cref{weakly nf is special} and the
general result on local systems:

\begin{theor} \label{main1}
Let $S$ be a smooth complex quasi-projective variety and $\VV$ a
complex local system on $S^\an$. 
\begin{itemize}
  \item[(a)] Suppose that $S$ is defined over a number field $L$. Then
    any weakly special, weakly non-factor subvariety of $S$ for $\VV$  
is defined over a finite extension of $L$;
    \item[(b)] if moreover $\VV$ is defined over $L$, then any
      $\Gal(\oQ/L)$-translates of a weakly special,
      resp. weakly non-factor, subvariety of $S$ for
      $\VV$ is a weakly special, resp. weakly non-factor,
      subvariety of $S$ for $\VV$. 
      \end{itemize}
\end{theor}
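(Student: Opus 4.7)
\emph{Overall strategy.} I would work via the Deligne-Riemann-Hilbert correspondence, replacing the complex local system $\VV$ with its associated regular algebraic flat connection $(\cV,\nabla)$ on $S$, so that $\Aut(\CC/L)$ acts naturally by pullback on all the data. The plan is to first establish a general functoriality statement for algebraic monodromy under $\Aut(\CC/L)$ (which yields Part (b) at once), and then to bootstrap to Part (a) using the weakly non-factor hypothesis as a rigidity assumption to force Galois orbits to be finite.

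\emph{Step 1: functoriality, and Part (b).} For $\sigma\in\Aut(\CC/L)$, the algebraic pullback $\sigma^*(\cV,\nabla)$ is again a regular algebraic flat connection on $S=S_L\otimes_L\CC$, corresponding via Riemann-Hilbert to a complex local system $\sigma^*\VV$ on $S^{\an}$. A Tannakian comparison, exploiting the fact that the algebraic monodromy group coincides with the Tannaka group of the corresponding category of flat connections, identifies $\HH_{\sigma(Y)}(\sigma^*\VV)$ with $\sigma_*\HH_Y(\VV)$ up to $\GL(V)$-conjugacy. Hence $\sigma(Y)$ is weakly special (respectively weakly non-factor) for $\sigma^*\VV$ if and only if $Y$ is so for $\VV$. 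When $\VV$ is defined over $L$, the descent $(\cV_L,\nabla_L)$ gives $\sigma^*(\cV,\nabla)=(\cV,\nabla)$ canonically for every $\sigma\in\Gal(\oQ/L)$, whence $\sigma^*\VV=\VV$, and Part (b) follows immediately.

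\emph{Step 2: Part (a), the core of the argument.} In Part (a), $\VV$ is not assumed $L$-rational, so $\sigma^*\VV$ generally differs from $\VV$; a Galois translate of a weakly non-factor subvariety is a priori weakly non-factor only for a different local system. My plan is to exhibit an $\Aut(\CC/L)$-stable \emph{countable} set
\[
\FS=\bigl\{Y'\subset S \text{ closed irreducible} : Y' \text{ is weakly special and weakly non-factor for } \mu^*\VV \text{ for some } \mu\in\Aut(\CC/L)\bigr\}.
\]
By Step 1 this set is Galois-stable and contains the orbit of $Y$, whence the standard fact that a countable $\Aut(\CC/L)$-orbit in an $L$-variety of finite type is finite yields Part (a).

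\emph{Main obstacle.} The substance of the proof is then the countability of $\FS$: there are uncountably many twists $\mu^*\VV$, each carrying its own family of weakly special subvarieties, so countability is not formal. I expect to establish it by leveraging the weakly non-factor hypothesis to rule out the positive-dimensional ``Hodge-theoretic deformation families'' of weakly special subvarieties (in the sense of the discussion following Definition \ref{non-factor}); weakly non-factor weakly special subvarieties should then appear as isolated points of a suitable stratification of a Hilbert scheme $\Hilb_{S_L}$ over $L$, producing a countable total across all twists. Making this isolation precise in the purely algebraic setting of flat connections with varying transcendental parameters, presumably via an Urbanik-style algebraization theorem for algebraic-monodromy loci, is where the bulk of the technical work will lie.
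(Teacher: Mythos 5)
Your Step 1 for Part (b) matches the paper's approach (Proposition \ref{twist}): the Tannakian comparison of algebraic monodromy groups under Galois twist is exactly how the paper argues, and your observation that $\sigma^*\VV \cong \VV$ when $\VV$ descends to $L$ and $\sigma \in \Gal(\oQ/L)$ is correct.

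For Part (a), however, your strategy diverges from the paper's and has a genuine gap. You propose to prove finiteness of the $\Aut(\CC/L)$-orbit of $Y$ by embedding it in a countable Galois-stable set $\FS$, but as you yourself acknowledge, the countability of $\FS$ is the entire content of the argument and nothing in the proposal establishes it. The difficulty is real: the uncountably many twists $\mu^*\VV$ generally have unrelated monodromy representations, and there is no a priori reason their collective weakly special, weakly non-factor loci should form a countable set; the appeal to a Hilbert scheme stratification and an ``Urbanik-style algebraization theorem'' is a hope, not a proof, and the ``isolation'' you need is precisely what must be shown. The paper's actual route to (a) sidesteps counting altogether. It replaces $S$ by the smooth locus $Z^0$ of the $\oQ$-Zariski-closure $Z$ of $Y$ (noting that $Y \cap Z^0$ remains weakly special and weakly non-factor for $(Z^0, \VV_{|Z^0})$), thereby reducing to the case where $Y$ is $\oQ$-Zariski-dense in a smooth $S$ defined over $\oQ$; it then suffices to prove $\HH_Y = \HH_S$, whence $Y = S$. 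This is Proposition \ref{monodromy}, proved by forming the spread $\pi: \cY \to T$ of $Y$ over a $\oQ$-base $T$ of dimension equal to the transcendence degree over $\oQ$ of the minimal field of definition of $Y$, choosing a Whitney stratification of $\pi$ defined over $\oQ$ (Goresky--MacPherson), and using minimality to place the distinguished fibre over a point of the open stratum. Local topological triviality of $\pi$ over that stratum forces the image of $\pi_1(Y^\an)$ in $\pi_1(S^\an)$ to be normal, hence $\HH_Y \trianglelefteq \HH_S$, and the weakly non-factor hypothesis then rules out a strict containment. This topological rigidity argument, which requires no countability at all, is the missing idea in your proposal.
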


\section{Proof of the main results}

\subsection{Proof of \Cref{main1}(b)}

\begin{proof}[\unskip\nopunct]

Let $S$ be a smooth complex quasi-projective variety, $\Loc_\CC(S^\an)$ the category of complex
local systems of finite rank on $S^\an$, $\MIC(S^\an)$ the category of
holomorphic modules with integrable connection on $S^\an$ and
$\MIC_r(S)$ the category of algebraic modules with regular integrable
connection on $S$. Following Deligne \cite[Theor.5.9]{De70}, the
analytification functor $\MIC_r(S) \to \MIC(S^\an)$ is an equivalence
of tensor categories. Composed with the Riemann-Hilbert correspondence this
provide an equivalence of tensor categories
\begin{equation} \label{Deligne}
  \MIC_r(S) \stackrel{\tau}{\simeq} \Loc_\CC(S^\an)\;\;.
  \end{equation}

  \medskip
Let $\VV \in \Loc_\CC(S^\an)$.
Let $\sigma: \CC \to \CC$ be a field automorphism. Let $S^\sigma:= S
\times_{\CC, \sigma} \CC$ be the twist of
$\SS$ under $\sigma$. We denote by $\VV^\sigma \in \Loc_\CC((S^\sigma)^\an)$ the
image of $\VV$ under the composition of
equivalence of (Tannakian) categories
\begin{equation} \label{e2}
  \Loc_\CC(S^\an) \stackrel{\tau^{-1}}{\sim} \MIC_r(S)
\stackrel{\cdot \times_{\CC, \sigma} \CC}{\sim} \MIC_r(S^\sigma)
\stackrel{\tau}{\sim} \Loc_\CC((S^\sigma)^\an)\;\;.
\end{equation}

\Cref{main1}(b) then follows immediately from the following more general:

\begin{prop} \label{twist}
Let $S$ be a smooth complex quasi-projective variety and $\VV \in
\Loc_\CC(S^\an)$. Let $\sigma: \CC \to \CC$ be a field
automorphism. Let $Y \subset S$ be a closed irreducible subvariety with Galois twist
$Y^\sigma \subset S^\sigma$.
\begin{enumerate}
  \item[(1)] the complex algebraic monodromy group $\HH_Y$
of $Y$ with respect to $\VV$ is canonically isomorphic to the complex
algebraic monodromy group $\HH_{Y^\sigma}$ of $Y^\sigma$ with respect
to $\VV^\sigma$.
\item[(2)] $Y$ is weakly special for $\VV$ if and only if $Y^\sigma$ is
  weakly special for $\VV^\sigma$.
  \item[(3)] $Y$ is weakly non-factor for $\VV$ if and only if $Y^\sigma$
    is weakly non-factor for $\VV^\sigma$.
    \end{enumerate}
  \end{prop}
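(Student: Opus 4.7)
My plan is to derive all three statements from the Tannakian formalism, exploiting that the operation $\VV \mapsto \VV^\sigma$ is defined through tensor equivalences of categories. Part (1) is the heart of the matter: once the algebraic monodromy group is shown to be preserved, up to canonical isomorphism, by the twist, parts (2) and (3) will follow formally from the fact that $Z \mapsto Z^\sigma$ is an inclusion-preserving bijection between closed irreducible subvarieties of $S$ and those of $S^\sigma$.

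For (1), I would first record the elementary compatibilities $(Y^\nor)^\sigma = (Y^\sigma)^\nor$ and $(\VV|_{Y^\nor})^\sigma = \VV^\sigma|_{(Y^\sigma)^\nor}$, which hold because normalization and restriction commute with base change along a field automorphism and because (\ref{e2}) is functorial in the variety. Restricting the equivalence (\ref{e2}) applied to $Y^\nor$ (or, if one wishes, to a smooth Zariski dense open therein, which does not affect the Zariski closure of monodromy by the codimension-two purity of the fundamental group for a normal variety) to the tensor subcategory generated by $\VV|_{Y^\nor}$ gives an equivalence of neutral Tannakian categories
\begin{equation*}
\langle \VV|_{Y^\nor}\rangle_{\CC\Loc}^\otimes \;\simeq\; \langle \VV^\sigma|_{(Y^\sigma)^\nor}\rangle_{\CC\Loc}^\otimes.
\end{equation*}
Choosing a base point $y \in Y^\nor$ together with its image $y^\sigma \in (Y^\sigma)^\nor$, the fibre functors at $y$ and $y^\sigma$ match under this equivalence, so Tannakian duality yields a canonical isomorphism of their Tannaka groups, and hence of identity components $\HH_Y \cong \HH_{Y^\sigma}$, compatible with the inclusions into $\GL(\VV_y) \cong \GL(\VV^\sigma_{y^\sigma})$.

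Parts (2) and (3) are then formal. For any chain of closed irreducible subvarieties $Y \subset Z \subset S$, functoriality of (1) identifies the inclusion $\HH_Y \subset \HH_Z$ with the inclusion $\HH_{Y^\sigma} \subset \HH_{Z^\sigma}$ coming from $Y^\sigma \subset Z^\sigma \subset S^\sigma$. Hence $Y$ is maximal among closed irreducible subvarieties with algebraic monodromy group $\HH_Y$ if and only if $Y^\sigma$ is, proving (2) via \Cref{ws}. Likewise the existence of a closed irreducible $Z \supsetneq Y$ with $\HH_Y$ a strict normal subgroup of $\HH_Z$ is equivalent, under the twist, to the existence of a closed irreducible $Z^\sigma \supsetneq Y^\sigma$ in $S^\sigma$ with $\HH_{Y^\sigma}$ a strict normal subgroup of $\HH_{Z^\sigma}$, since normality and strict inclusion are preserved by the canonical isomorphism of (1); this yields (3).

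The main (and rather mild) obstacle I anticipate is the bookkeeping required to make the Tannakian argument for (1) precise: matching compatible fibre functors on the two sides, checking that (\ref{e2}) restricts appropriately to (the smooth locus of) $Y^\nor$, and confirming that Zariski closure of monodromy commutes with these identifications. None of these points is conceptually difficult; they are technical verifications that must be carried out to make the formal argument rigorous, after which (2) and (3) follow essentially by inspection.
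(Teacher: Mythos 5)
Your proposal follows essentially the same route as the paper: derive (1) from the tensor equivalence (\ref{e2}) restricted to the Tannakian subcategory generated by $\VV|_{Y^\nor}$, then deduce (2) and (3) by transporting candidate subvarieties $Z$ back and forth under $\sigma$. Two small deviations are worth flagging. First, in the singular case the paper passes to a desingularisation $Y^s \to Y^\nor \to Y$ and uses surjectivity of $\pi_1(Y^s) \to \pi_1(Y^\nor)$, whereas you shrink to a smooth dense open of $Y^\nor$ and invoke Zariski--Nagata purity; both are fine and yield the same conclusion. Second, and more substantively, for part (3) you invoke "functoriality of (1)" to identify the inclusion $\HH_Y \subset \HH_Z$ with $\HH_{Y^\sigma} \subset \HH_{Z^\sigma}$, and then say normality is preserved; this is plausible but is exactly the compatibility that needs to be checked, since the canonical isomorphisms of (1) are defined separately for $Y$ and for $Z$ and a priori live in different ambient $\GL$'s. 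The paper sidesteps this by an intrinsic Tannakian argument: it observes that the Tannaka group of the subcategory of $\Loc_\CC(S^\an)$ consisting of local systems trivial on $Y^\an$ is the normal closure of $\HH_Y$ in $\HH_S$, and that applying $\sigma$ carries this subcategory to the analogous one for $Y^\sigma \subset S^\sigma$; hence normality of $\HH_Y$ in $\HH_S$ transfers without ever having to compare the two isomorphisms of (1). Your argument would be complete once you actually verify the functoriality you appeal to (the restriction functors $\langle \VV|_{Z^\nor}\rangle^\otimes \to \langle \VV|_{Y^\nor}\rangle^\otimes$ commute with the equivalence (\ref{e2}) and with the choices of fibre functor), but the paper's normal-closure trick is cleaner and avoids that bookkeeping entirely.
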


  \begin{proof}
    Let us first assume that $Y$ is smooth. In that case the
    equivalence of tensor categories~(\ref{e2}) $\Loc_\CC(Y^\an)
    \stackrel{\tau}{\simeq} \Loc_\CC((Y^\sigma)^\an)$ restricts to an
    equivalence of tensor categories
    $$ \langle  \VV_{|Y} \rangle^\otimes \stackrel{\tau}{\simeq} \langle
    \VV_{|Y}^\sigma \rangle^\otimes \;\;. $$
    Taking (the connected component of the identity of) their Tannaka
    groups we obtain a canonical isomorphism
    $$\HH_Y \simeq \HH_{Y^\sigma}\;\;,$$
    thus proving \Cref{twist}(1) in that case.

    When $Y$ is not smooth, we
    consider a desingularisation $Y^s \stackrel{p}{\to} Y^\nor
    \stackrel{\pi}{\to} Y$. Notice that $(Y^s)^\sigma$ is a
    desingularisation of $(Y^\norm)^\sigma = (Y^\sigma)^\norm$. Notice
    moreover that the algebraic monodromy groups of $(p\circ \pi)^*
    \VV_{|Y}$ and $\pi^*\VV_Y$ coincides, as $p_* : \pi_1(Y^s) \to
    \pi_1(Y^\norm)$ is surjective. Arguing as above for $Y^s$ and
    $(Y^s)^\sigma$ proves \Cref{twist}(1) in general.

    Suppose now that $Y \subset S$ is a closed irreducible subvariety. If $Y^\sigma$ is not weakly special for
    $\VV^\sigma$ there exists $Z \supset Y^\sigma$ a closed
    irreducible subvariety of $S^\sigma$ containing $Y^\sigma$
    strictly and such that $\HH_Z =\HH_{Y^\sigma}$. But then
    $Z^{{\sigma^{-1}}}$ is a closed irreducible subvariety of $S$
    containing $Y$ strictly, and such that $\HH_{Z^{\sigma^{-1}}}=
    \HH_Y$ by \Cref{twist}(1). It follows that $Y$ is not weakly
    special. This proves \Cref{twist}(2).

    The argument for \Cref{twist}(3) is similar. We are reduced to
    showing that for $S$ a smooth complex quasi-projective variety,
    $\VV \in \Loc_\CC(S^\an)$, $\sigma: \CC \to \CC$ a field
automorphism and $Y \subset S$ a closed irreducible subvariety with Galois twist
$Y^\sigma \subset S^\sigma$, then $\HH_{Y}$ is
normal in $\HH_{S}$ if and only if $\HH_{Y^\sigma}$ is normal in
$\HH_{S^\sigma}$. Consider the tannakian subcategory $\cT$ of 
$\Loc_\CC(S^\an)$ consisting of the local systems which are trivial in
restriction to $Y^\an$. Applying $\sigma$ we obtain that $\cT^\sigma$
is the tannakian subcategory of $\Loc_\CC((S^\sigma)^\an)$ of local
systems that are trivial on $(Y^\sigma)^\an$. But as a result of the
tannakian formalism the
Tannaka group of $\cT$, resp. $\cT^\sigma$, are the normal closures of
$\HH_Y$ and $\HH_{Y^{\sigma}}$ in $\HH_S$ and $\HH_{S^{\sigma}}$
respectively. Hence the result.

  \end{proof}

  \end{proof}

\subsection{Proof of \Cref{main} when $\VV$ is defined over a number
  field}

\begin{proof}[\unskip\nopunct]
Although this is not logically necessary, let us notice that 
\Cref{main} in the case where $\VV$ is defined over a number field $L$
follows from \Cref{main1}(b). Indeed when $\VV$ is a $\ZZ$VHS, weakly
special weakly non-factor subvarieties of $S$ for $\VV$ are special
subvarieties of $S$ for $\VV$ by \Cref{weakly nf is special}. Applying
\Cref{main1}(b), it follows that the $\Aut(\CC/L)$-translates of any
special, weakly non-factor, subvariety of $S$ for $\VV$ is
special (and weakly non-factor). But special subvarieties of $S$ for $\VV$ form a countable
set. It follows immediately that any special, weakly non-factor,
subvariety of $S$ for $\VV$ is defined over $\oQ$ (see for instance
\cite[Claim p.25]{Voisin2}).

\end{proof}

\subsection{Proof of \Cref{main1}(a)}
\begin{proof}[\unskip\nopunct]
 Let us now prove \Cref{main1}(a), hence finish the proof of \Cref{main}.
Let $S$ be a complex irreducible smooth quasi-projective variety and
$\VV$ a complex local system on $S^\an$. Suppose that $S$ is defined
over a number field $L \subset \CC$.
Let $Y \subset S$ be a weakly special subvariety of $S$ for $\VV$ which is
weakly non-factor. Let us show that $Y$ is defined over $\oQ$.

\medskip
Let $ Z \subset S$ be the $\oQ$-Zariski-closure of $Y$, i.e. the
smallest closed subvariety of $S$ defined over $\oQ$ and containing $Y$. Thus
$Z$ is irreducible.

\medskip
The subset $Z^0 \subset Z$ of smooth points is $\oQ$-Zariski-open
(meaning that $Z - Z^0$ is a closed subvariety of $Z$ defined over
$\oQ$) and dense. Notice that $Y \cap Z^0$ is Zariski-open in $Y$
(otherwise $Y$ would be contained in the closed subvariety $Z - Z^0$
defined over $\oQ$, in contradiction to the $\oQ$-Zariski-density of $Y$
in $Z$); moreover the fact that $Y \subset S$ is weakly special,
resp. weakly non-factor for $(S, \VV)$ implies that $Y^0:=Y \cap
Z^0$ is weakly special, resp. weakly non-factor for 
$(Z^0, \VV_{|Z^{0}})$. Replacing $Y \subset S$ by $Y^0 \subset
Z^0$ if necessary, we can without loss of generality assume that $Y$
is $\oQ$-Zariski-dense in $S$. We are reduced to proving that $Y=S$,
or equivalently that $\HH_Y = \HH_S$. This follows immediately
from the \Cref{monodromy} below, of independent interest.
\end{proof}

\begin{prop} \label{monodromy}
  Let $S$ be a smooth complex quasi-projective variety, $\VV$ a
  complex local system on $S^\an$ and let $Y\subset S$ be a closed irreducible
weakly non-factor subvariety for $\VV$. Suppose that $S$ is defined over $\oQ$
and that $Y$ is $\oQ$-Zariski-dense in $S$. Then $\HH_Y = \HH_S$.
\end{prop}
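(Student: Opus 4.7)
I would argue by contradiction, assuming $\HH_Y \subsetneq \HH_S$. Applying the weakly non-factor hypothesis with $Z = S$, the group $\HH_Y$ cannot be a proper normal subgroup of $\HH_S$, so it must be a non-normal proper subgroup. The goal then becomes to contradict the $\oQ$-Zariski-density of $Y$ in $S$ by exhibiting a proper $\oQ$-closed subvariety of $S$ containing $Y$.

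The strategy is to convert the inclusion $\HH_Y \subsetneq \HH_S$ into a geometric obstruction on $S$ via Tannakian duality. If the normal closure $N$ of $\HH_Y$ in $\HH_S$ is a proper subgroup of $\HH_S$, then there exists a non-trivial local system $\WW \in \langle \VV \rangle^\otimes$ whose restriction to $Y$ is trivial. Through Deligne's equivalence~(\ref{Deligne}), $\WW$ corresponds to an algebraic regular integrable connection $(\cW, \nabla) \in \MIC_r(S)$ that becomes analytically trivial on $Y$, which should cut out a proper closed algebraic subvariety $T \subsetneq S$, the ``maximal trivialization locus'' of $\WW$, containing $Y$. If on the other hand $N = \HH_S$ (so this construction is empty), I would instead work with the homogeneous space $\HH_S/\HH_Y$, or with representations of $\HH_S$ having non-trivial $\HH_Y$-invariants, to extract a similar but subtler geometric locus on $S$ reflecting the subgroup $\HH_Y$ rather than just its normal closure.

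To transfer $T$ to an $\oQ$-defined subvariety, I would invoke \Cref{twist}. Spread $(\cW, \nabla)$ (and hence $T$) over a finitely generated $\oQ$-subring $R \subset \CC$; for every $\sigma \in \Aut(\CC/\oQ)$, the Galois twist $(\cW^\sigma, \nabla^\sigma)$ lives on $S^\sigma = S$, trivializes on $Y^\sigma$ (by \Cref{twist}(1) applied to subvarieties of $Y$), and so produces subvarieties $T_\sigma \subset S$ with $T_\sigma \supset Y^\sigma$. The union $\bigcup_\sigma T_\sigma$ is a Galois-stable constructible family over $R$ whose $\oQ$-Zariski-closure is an $\oQ$-closed subvariety of $S$, contained in the $\oQ$-closure of a single proper $T$ and hence strictly smaller than $S$, yet containing $Y = Y^{\Id}$. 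This contradicts $\oQ$-Zariski-density.

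The main obstacle is the algebraization and Galois-descent step: the ``triviality locus'' of a local system is a priori an analytic notion and the connection $(\cW, \nabla)$ is defined only over some finitely generated extension of $\oQ$, so one must show that $T$ is genuinely an algebraic subvariety of $S$ of finite type, and that its Galois orbit under $\Aut(\CC/\oQ)$ assembles into a proper $\oQ$-closed subset rather than spreading out to all of $S$. Carefully tracking the field of definition of $(\cW, \nabla)$ via the equivalence~(\ref{Deligne}) and invoking \Cref{twist}(1) to compare the conjugate algebraic monodromy groups $\HH_{Y^\sigma}(\VV^\sigma) \cong \HH_Y(\VV)$ will be the technical heart of the argument.
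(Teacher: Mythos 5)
Your proposal and the paper's proof share the goal of contradicting the $\oQ$-Zariski density hypothesis, but the engine you propose is fundamentally different from the paper's, and it has real gaps. The paper takes the spread $\cY \to T$ of $Y$ over its minimal field of definition, observes that $p: \cY \to S$ is dominant so the weakly non-factor condition transfers to the fiber $\cY_{t_0}\simeq Y$ inside $\cY^0$, then invokes Goresky--MacPherson Whitney stratifications defined over $\oQ$ to show that over the open stratum of $T$ the map $\pi$ is a topological fiber bundle. Since $t_0$ lies in the open stratum by minimality of $K$, $\pi_1(Y^\an)$ maps to a \emph{normal} subgroup of $\pi_1(S^\an)$, hence $\HH_Y \trianglelefteq \HH_S$, and the weakly non-factor hypothesis forces $\HH_Y = \HH_S$. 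This topological normality step is the heart of the argument and is entirely absent from your proposal.

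Two concrete problems with your plan. First, the ``maximal trivialization locus'' $T$ of a local system $\WW$ is not a well-defined subvariety of $S$: triviality of a local system is a condition on a subvariety (on its fundamental group), not on its points, so there is no natural closed subset of $S$ carved out by ``$\WW$ is trivial here'' that would contain $Y$ and be strictly smaller than $S$. Second, and more decisively, you explicitly flag that when the normal closure $N$ of $\HH_Y$ in $\HH_S$ is all of $\HH_S$ your Tannakian construction ``is empty,'' and you only offer a vague hope of working with $\HH_S/\HH_Y$ or $\HH_Y$-invariants. But this is precisely the hard case: when $\HH_Y$ is a proper non-normal subgroup whose normal closure is everything, the Tannakian subcategory of local systems trivial on $Y$ is trivial and carries no information. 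Your approach has no mechanism to rule this case out, whereas the paper's topological fibration argument establishes normality of $\HH_Y$ in $\HH_S$ unconditionally, at which point the weakly non-factor hypothesis does the rest. Finally, your Galois-descent step (taking $\bigcup_\sigma T_\sigma$) is a gap you yourself name: with $(\cW,\nabla)$ defined only over a finitely generated extension of $\oQ$, nothing prevents the conjugates $T_\sigma$ from sweeping out all of $S$, and you provide no argument that they do not.
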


\begin{proof}
Let $\cY$ be ``the'' spread of $Y$ with respect to $S$. Let us recall
its definition.
Let $K \subset \CC$ be the minimal
field of definition of $Y$, see
\cite[Cor. 4.8.11]{Gro}. This is the smallest subfield $\oQ \subset
K \subset \CC$ such that $Y$ is defined over $K$: there exists a $K$-scheme of finite type $Y_K$
such that $Y = Y_K \otimes_K \CC$. Let us choose $R \subset K$ a finitely generated
$\oQ$-algebra whose field of fractions is $K$ and let $\cY_R$ be an
$R$-model of $Y_K = \cY_R \otimes_R K$. The morphism $\cY_R \to \Spec
R$ induces a morphism of complex varieties $\cY := \cY_R \otimes_\oQ \CC \to T:= \Spec (R
\otimes_{\oQ} \CC)$, defined over $\oQ$. Notice that the complex dimension of $T$ is the transcendence degree of $K$ over
$\oQ$. The natural closed immersion $Y_R \subset S \otimes_\oQ R$
makes $\cY$ a closed irreducible variety $$\cY \subset S \times_\CC T$$
defined over $\oQ$, with induced projections $p: \cY \to S$ and
$\pi: \cY \to T$, both defined over $\oQ$, such that $\cY_{t_{0}} := \pi^{-1}(x_0) \simeq Y$
where $t_0 \in T(\CC)$ is the closed point given by $R \subset K
\subset \CC$. By construction the morphism $p$ is dominant. The variety $\cY$ is called ``the'' spread of $Y$. It
depends on the choice of $R$ but different choices give rise to
birational varieties $\cY$s. Shrinking $\Spec R$ if necessary, we can assume
without loss of generality that $T$ is smooth.

\medskip
Let $\cY^0 \subset \cY$ be the $\oQ$-Zariski-open dense subset of
smooth points. As $p$ is dominant, the fact that $Y \subset
  S$ is weakly non-factor for $(S, \VV)$ implies that $Y^0:= \cY^0
\cap Y \subset \cY^0$ is weakly non-factor for $(\cY^0,
p^{-1}(\VV)_{|\cY^{0}})$. As $\HH_{Y^0} = \HH_Y$ and $\HH_{\cY^0}=
\HH_S$, to show that $\HH_Y = \HH_S$ we are reduced, replacing $S$ by
$\cY^0$ and $Y$ by $\cY^0
\cap Y$ if necessary, to the situation where there exists a morphism $\pi: S
\to T$ defined over $\oQ$ such that $Y = S_{t_{0}} \subset S$ and $Y$
is weakly non-factor for $(S, \VV)$.

\medskip
It follows from \cite[Theorem p.57]{GM} that there exist finite
Whitney stratifications $(S_l)$ of $S$ and $(T_l)_{l \leq d}$ of
$T$ by locally closed algebraic subsets $T_l$ of dimension $l$ ($d=\dim T$) such
that for each connected component $Z$ (a stratum) of $T_l$,  $\pi^{-1}(Z)^\an$ is a topological fibre bundle over $Z^\an$, and a
union of connected components of strata of $(S_j^\an)$, each mapped
submersively to $Z^\an$ (moreover, for all $t \in Z^\an$, there exists an open neigbourhood $U(t)$ in
$Z^\an$ and a stratum preserving homeomorphism $h: \pi^{-1}(U) \simeq
\pi^{-1}(t) \times U$ such that $\pi_{|\pi^{-1}(U)} = p_U \circ h$,
where $p_U$ denotes the projection to $U$). These Whitney stratifications can be chosen defined over
$\oQ$ (meaning that the closure of each stratum is defined over
$\oQ$): see \cite{Teissier}, \cite[3.1.9]{Ar}.

\medskip
It follows from the minimality of $K$ that $t_0$ belongs to the unique open
stratum $T_d$, $d= \dim T$. Without loss of generality we can and will
assume
from now on that $T= T_d$. In particular  $S^\an$ is a topological
fibre bundle over $T^\an$.

\medskip
If follows that the image of $\pi_1(Y^\an)$ in $\pi_1(S^\an)$ is a
normal subgroup. Hence $\HH_Y$ is a normal subgroup of $\HH_S$. As $Y
\subset S$ is weakly non-factor it follows that $\HH_Y = \HH_S$. 

\end{proof}

  \subsection{Proof of \Cref{cor1}}
  \begin{proof}[\unskip\nopunct]
    Let $S$, $\VV$ and $Y$ as in the statement of \Cref{cor1}. Let us
    show that $Y$ is weakly non-factor.
    Let $Z \subset S$ be a closed irreducible subvariety of
    $S$ containing $Y$ strictly, and such that $\HH_Y$ is is a strict
    normal subgroup of $\HH_Z$.  As the special closure $\langle
Z\rangle_{\textnormal{s}}$ of $Z$ is a special subvariety of $(S,
\VV)$ containing $Y$, it follows from the maximality of $Y$ that $\langle
Z\rangle_{\textnormal{s}}=
S$. As $\HH_Z$ is normal (see \cite[Theor.1]{An92}) in the
algebraic group $\G_Z^\der = \G_S^\der$ which is assumed to be
simple,  it follows that either $\HH_Z = \{1\}$ or $\HH_Z = \HH_S=
\G_S^\der$. As $\HH_Y$ is a strict normal subgroup of $\HH_Z$,
necessarily $\HH_Y=\{1\}$ (and $\HH_Z= \HH_S$). This is impossible as $Y$ is positive
dimensional for $\VV$. Hence such a $Z$ does not exist and $Y$ is
weakly non-factor. The conclusion then follows from \Cref{main}.

\end{proof}

\subsection{Proof of \Cref{cor2}}

\begin{proof}[\unskip\nopunct]
Let us suppose that the special points for $\ZZ$VHS's defined over $\oQ$
are defined over $\oQ$. Let $\VV \to S^\an$ be a $\ZZ$VHS defined over
$\oQ$ and let $Y$ be a special
subvariety of $S$ for $\VV$. Let us show that $Y$ is defined over $\oQ$.

\medskip
Suppose for the sake of contradiction that $Y$ is not defined over
$\oQ$. Let $Z\subset S$ be the $\oQ$-Zariski closure $Z$ of
$Y$ in $S$. Again, replacing $S$ by the $\oQ$-Zariski open subset of smooth
points $Z^0$ of $Z$ and $Y$ by $Y^0 := Z^0 \cap Y$ we can without loss of
generality assume that $Z=S$ is smooth. Arguing as in the proof of
\Cref{main1}(a) we may assume 
that $\HH_Y$ is a strict normal subgroup of $\HH_S$, hence of
$\G_S$.

\medskip
It follows that there exist a finite collection of
natural integers $a_i, b_i$, $1 \leq i \leq n$ such that the $\ZZ$VHS
$\VV':= (\bigoplus_{1\leq i \leq n}\VV^{\otimes a_i} \otimes (\VV^\vee)^{\otimes
  b_i})^{\HH_Y}$ consisting of the $\HH_Y$-invariant vectors in
$\bigoplus_{1\leq i \leq n}\VV^{\otimes a_i} \otimes
(\VV^\vee)^{\otimes b_i} $ has generic
Mumford-Tate group $\G'_S= \G_S /\HH_Y$ and algebraic monodromy group
$\HH'_S:= \HH_S/\HH_Y$. Writing $(\G'_S= \G_S /\HH_Y, \cD'_S:= \cD_S/\HH_Y)$ for the
quotient Hodge datum of $(\G_S, \cD_S)$ by $\HH_Y$ and $\pi: X_S \twoheadrightarrow
X'_S$ the induced projection of Hodge varieties, the period map for
$\VV'$ is $\Phi'_S:= \pi \circ \Phi_S: S^\an \to X'_S$. The special
subvariety $Y$ of $S$ for $\VV$ is still a special subvariety of $S$
for $\VV'$ and its image $\Phi'_S(Y)$ is a point.

\medskip
Following \cite[Theor.1.1]{BBT} there exists a
factorisation $$ \Phi'_S = \Psi \circ q\;\;,$$
where $q: S \to B$ is a proper morphism of quasi-projective varieties
defined over $\oQ$ and $\Psi: B \to X'$ is a quasi-finite period
map. This means that $\VV' = q^* \VV'_B$ for a $\ZZ$VHS
$\VV'_B$, and that $b_0:= q(Y)$ is a special point of $B$ for
$\VV'_B$.

\medskip
It follows from \Cref{subvariation} below that the $\ZZ$VHS 
$\VV'$ can be defined over $\oQ$. It then follows from \Cref{descent}
below that $\VV'_B$ is also defined over $\oQ$. Under our assumption
that special points of $\ZZ$VHS defined over $\oQ$ are defined over
$\oQ$ one concludes that the special point $b_0$ of $B$ for $\VV'_B$
is defined over $\oQ$. But then the irreducible component $Y$ of
$q^{-1}(b_0)$ is also defined over $\oQ$, a contradiction.

\medskip
This finishes the proof of \Cref{cor2}.
 
\end{proof}

\begin{lem} \label{subvariation}
  Let $\VV$ be a $\ZZ$VHS and $\VV'$ a sub-$\ZZ$VHS. If $\VV$ is
  definable over $\oQ$ then there exists a $\oQ$-structure on $\VV$
  and $\VV'$ such that the projection $\VV\twoheadrightarrow \VV'$ is defined over $\oQ$.
\end{lem}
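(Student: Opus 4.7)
Write $(\cV, F^\bullet, \nabla)$ for the algebraic avatar of $\VV$, defined over a number field $L \subset \oQ$, and let $\cV' \subset \cV$ be the flat algebraic sub-bundle corresponding to $\VV'$ via the equivalence~\eqref{Deligne}, a priori defined only over $\CC$. The problem reduces to proving that $\cV' \subset \cV$ descends to $\oQ$: once this is established, the polarization on $\VV$ produces a complementary sub-$\ZZ$VHS $\VV'' \subset \VV$ (also $\oQ$-defined by the same argument applied to $\VV''$), so the decomposition $\cV = \cV' \oplus \cV''$ is $\oQ$-defined and yields the projection $\cV \twoheadrightarrow \cV'$ over $\oQ$.

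First I would observe that sub-$\ZZ$VHS of $\VV$ of fixed rank $r$ form a countable set: at a basepoint $s_0$, each such sub-VHS is determined by a monodromy-invariant $\ZZ$-sublattice of rank $r$ in $\VV_{\ZZ, s_0}$, and such sublattices form a countable set. Through~\eqref{Deligne}, distinct sub-$\ZZ$VHS yield distinct flat algebraic sub-bundles of $\cV$, so the flat algebraic sub-bundles of $\cV$ arising from sub-$\ZZ$VHS form a countable subset of the set of all flat $F^\bullet$-compatible algebraic sub-bundles of $\cV$.

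Next, I would bound the Galois orbit of $\cV'$ under $\Aut(\CC/L)$ via a spread argument in the spirit of \Cref{monodromy}. Let $K \subset \CC$ be the minimal field of definition of $\cV' \subset \cV$ over $L$, and spread over a smooth $\oQ$-variety $T$ of dimension equal to the transcendence degree of $K$ over $L$, yielding a family of sub-bundles $\{\cV'_t\}_{t \in T}$ with $\cV'_{t_0} = \cV'$, each a flat $F^\bullet$-compatible sub-bundle of $\cV$ (these conditions being $\oQ$-algebraic, hence preserved by spreading). The aim would then be to show that generically $\cV'_t$ arises from a sub-$\ZZ$VHS; combined with the countability from the previous step, this forces $\dim T = 0$, so $\cV'$ is defined over a finite extension of $L$ inside $\oQ$.

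\textbf{Main obstacle.} The heart of the argument is the claim that Galois conjugates of $\cV'$ still arise from sub-$\ZZ$VHS. The Galois group $\Aut(\CC/L)$ acts on the algebraic data $(\cV, F^\bullet, \nabla)$, but the integral structure of $\VV_\ZZ$ is purely topological, so compatibility between the two actions is not formal. Bridging this gap is the main technical difficulty and likely requires exploiting the rigidity of monodromy-invariant $\ZZ$-sublattices together with the $\oQ$-algebraicity of $F^\bullet$ and $\nabla$. Once this is achieved, finiteness of the Galois orbit follows from the standard dichotomy that the Galois orbit of a specific $\CC$-point under $\Aut(\CC/L)$ is either finite or uncountable, so landing inside a countable set forces finiteness.
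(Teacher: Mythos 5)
Your plan does not match the paper's proof, and I believe it would not succeed. The crucial point you overlook is in the very phrasing of the lemma: it does \emph{not} assert that $\cV' \subset \cV$ descends to $\oQ$ with respect to the originally given $\oQ$-structure on $(\cV, F^\bullet, \nabla)$; it asserts only that one may \emph{choose} a (possibly different) $\oQ$-structure on $\cV$ for which the projection is $\oQ$-rational. This extra freedom is not a cosmetic detail --- it is the whole content of the lemma, and the paper's proof is entirely organized around constructing the required change of $\oQ$-structure. Concretely, the paper considers the finite-dimensional $\oQ$-algebra $E$ of $\nabla$-flat, $F^\bullet$-preserving global algebraic sections of $\cV_\oQ \otimes \cV_\oQ^\vee$, observes that $\Hom_{\ZZ\mathrm{VHS}}(\VV,\VV)\otimes\CC$ sits inside $E_\CC$ as a semi-simple subalgebra $\cA$, invokes Wedderburn--Malcev to conjugate $\cA$ by an element $1+j$ (with $j$ in the complexified Jacobson radical) into a Wedderburn complement $T_\CC$ with $T$ defined over $\oQ$, and then conjugates further inside $T_\CC$ to bring the idempotent $e_\CC$ cutting out $\VV'$ to an idempotent $e \in T$. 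The element $f(1+j) \in E_\CC$ performing the total conjugation is precisely the new $\oQ$-structure. No spread, no countability, no descent of $\cV'$ in the original structure is involved.

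Your ``main obstacle'' paragraph correctly identifies the difficulty, but I want to emphasize that it is not merely technical: it is the reason your reduction fails. There is no a priori compatibility between the $\Aut(\CC/L)$-action on the algebraic module $(\cV,F^\bullet,\nabla)$ and the topological $\ZZ$-structure $\VV_\ZZ$. In general a Galois conjugate of $\cV'$ is a flat $F^\bullet$-compatible sub-bundle of $\cV$ that corresponds to a sub-local system of some conjugate local system $\VV^\sigma$, and there is no reason for $\VV^\sigma$ to be isomorphic to $\VV$ as a $\ZZ$-local system, let alone for that isomorphism to carry $\cV'^\sigma$ to something coming from a $\ZZ$-sublattice of $\VV_\ZZ$. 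Indeed, already in the toy case $\VV = \VV_1^{\oplus 2}$ with $\VV_1$ monodromy-irreducible, the algebra $E_\CC \cong M_2(\CC)$ has a $\PP^1(\CC)$ of rank-one idempotents; the ones arising from sub-$\QQ$VHS form a copy of $\PP^1(\QQ)$, while those rational for the given $\oQ$-structure form a $\PP^1(\oQ)$ that can be twisted away from the former --- so your desired conclusion (descent of $\cV'$ in the given structure) is simply false in general, and no countability/spread argument can rescue it. What the paper does instead is exactly untwist this discrepancy: Wedderburn--Malcev and idempotent-conjugation produce the element of $E_\CC$ that aligns the $\QQ$-Hodge-theoretic idempotent with a $\oQ$-rational one.

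To make progress, abandon the attempt to show descent for the given $\oQ$-structure and instead work directly with the algebra $E$: it is finite-dimensional over $\oQ$ because $S$ is quasi-projective and the sections are flat, it acts on $\cV$ and thereby controls all possible $\oQ$-structures compatible with $(F^\bullet,\nabla)$ via the torsor $E_\CC^\times/E^\times$, and the semi-simplicity of the category of polarizable $\QQ$VHS is exactly what lets Wedderburn--Malcev land the Hodge-theoretic endomorphisms inside a $\oQ$-rational Wedderburn complement.
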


\begin{proof}
  Let $E$ be the finite dimensional $\oQ$-algebra of $\nabla$-flat 
  $F^\bullet$-preserving algebraic sections over $S$ of $\cV_\oQ \otimes
  \cV_\oQ^\vee$. Each invertible element of $E_\CC:=E
  \otimes_\oQ \CC$ defines a natural $\oQ$-structure on $\cV$,
  $F^\bullet$ and $\nabla$, the
  original one $(\cV_\oQ, \F^\bullet_\oQ, \nabla_\oQ)$ being preserved exactly by the invertible elements of
  $E$.

  \medskip
  Let $J$ be the Jacobson radical of $E$. Let us choose $T
  \subset E$ a (semi-simple) splitting of the projection $E \to
  E/J$. As the category of polarizable $\QQ$VHS is abelian semi-simple
  the finite dimensional complex algebra
  $\Hom_{\ZZ\textnormal{VHS}}(\VV, \VV)\otimes_\ZZ \CC$ is semi-simple.
  Under the Riemann-Hilbert correspondence it identifies with a
  semi-simple subalgebra $\cA \subset E_\CC$. Following a classical
  result of Wedderburn-Malcev there exists an element $j \in  J_\CC:=
  J \otimes_\oQ \CC$ such that $(1+j) \cA (1+j)^{-1} \subset T_\CC$.

  \medskip
 Let $e_\CC \in \cA$ be the idempotent of $S_\CC$ corresponding to the
 projection of $\ZZ$VHS $\pi: \VV \twoheadrightarrow\VV'$ under the
 Riemann-Hilbert correspondence. As $T_\CC$ is
 semi-simple, hence a product of matrix algebras, any idempotent of $T_\CC$ is conjugated to an idempotent
 in $S$. Thus there exist an invertible element $f \in T_\CC$ and $e \in T$ such that $(1+j)
 e_\CC (1+j)^{-1}= f^{-1} e f$.

 \medskip
 If we endow $(\cV, F^\bullet, \nabla)$ with the $\oQ$-structure
 defined by the element $f (1+j) \in E_\CC$ it follows that the image
 of $\pi: \VV \twoheadrightarrow \VV'$ under the Riemann-Hilbert correspondence
 is defined over $\oQ$ for this new $\oQ$-structure.  Hence the result.

\end{proof}

\begin{lem} \label{descent}
  Let $f: S \lo B$ be a proper morphism of $\oQ$-varieties defined
  over $\oQ$, such that $f_* \cO_{S} = \cO_{B}$. Let $\VV_B$
  be a $\ZZ$VHS on $B$. If the $\ZZ$VHS $\VV_S:=f^*\VV_B$ on $S$ is definable over
  $\oQ$ then $\VV_B$ is also definable over $\oQ$.
\end{lem}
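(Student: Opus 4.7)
The plan is to descend the given $\oQ$-structure on $(\cV_S, F_S^\bullet, \nabla_S)$ to $B$ by taking the proper pushforward along $f_{\oQ}\colon S_\oQ \to B_\oQ$, then to verify, via the projection formula and flat base change, that this construction recovers $(\cV_B, F_B^\bullet, \nabla_B)$ after extending scalars to $\CC$.

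First I would set $\cV_{B,\oQ} := f_{\oQ,*}\cV_{S,\oQ}$ and $F^p\cV_{B,\oQ} := f_{\oQ,*}F^p\cV_{S,\oQ}$. Since $\cV_S = f^*\cV_B$ and $F^p\cV_S = f^*F^p\cV_B$, the projection formula combined with the hypothesis $f_*\cO_S = \cO_B$ yields $f_*\cV_S = \cV_B$ and $f_*F^p\cV_S = F^p\cV_B$. Since $f_\oQ$ is proper and $\oQ \to \CC$ is faithfully flat, flat base change for coherent direct images gives $\cV_{B,\oQ}\otimes_\oQ \CC \simeq \cV_B$ together with the compatible identifications for each $F^p$. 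These are the sought $\oQ$-forms.

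The main obstacle is descending the connection $\nabla_B$, which unlike the underlying sheaf and its filtration is not visible on $S$ alone. To see it, I would exploit the conormal short exact sequence
$$0 \to f^*\Omega^1_B \to \Omega^1_S \to \Omega^1_{S/B} \to 0.$$
Tensoring with $\cV_S$, applying $f_*$, and invoking the projection formula identifies $f_*(\cV_S\otimes f^*\Omega^1_B)$ with $\cV_B\otimes\Omega^1_B$, realized as a subsheaf of $f_*(\cV_S\otimes\Omega^1_S)$. The hypothesis $\nabla_S = f^*\nabla_B$ is precisely the assertion that the relative component of $\nabla_S$ vanishes on sections of $\cV_S$ coming from $\cV_B$, so the map $f_*\nabla_S\colon \cV_B \to f_*(\cV_S\otimes\Omega^1_S)$ factors through $\cV_B\otimes\Omega^1_B$ and reconstructs $\nabla_B$. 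Performing the same construction over $\oQ$ starting from $\nabla_{S,\oQ}$ and the analogous sequence on $S_\oQ$ produces $\nabla_{B,\oQ}\colon \cV_{B,\oQ} \to \cV_{B,\oQ}\otimes\Omega^1_{B,\oQ}$, which base changes to $\nabla_B$ by naturality of all the identifications used. The delicate point to check is that the relative component of $\nabla_{S,\oQ}$ genuinely vanishes on the image of $\cV_{B,\oQ}$ in $f_{\oQ,*}\cV_{S,\oQ}$; this follows from faithful flatness of $\oQ\to\CC$ applied to the corresponding vanishing over $\CC$.
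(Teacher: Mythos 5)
Your proposal follows essentially the same route as the paper: define the $\oQ$-structure on $B$ by pushing forward along $f_\oQ$, and use the projection formula together with $f_*\cO_S = \cO_B$ to identify $f_*\cV_S$ with $\cV_B$ (and likewise for $F^\bullet$ and $\nabla$). The paper compresses the connection step into the single assertion $\nabla_B = f_*\nabla_S$, whereas you spell out why the pushforward of $\nabla_S$ factors through $\cV_B\otimes\Omega^1_B$ via the conormal sequence and then descend the vanishing of the relative component to $\oQ$ by faithful flatness; that is filling in detail the paper leaves implicit, not a different argument.
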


\begin{proof}
 Let $(\cV_S:= f^* \cV_B, F^\bullet_S:= f^* F_B^\bullet, \nabla_S:=
 f^* \nabla_B)$ be the De Rham incarnation of $\VV_S$. 
  It follows from the projection formula and the assumption $f_* \cO_{S} = \cO_{B}$ that
  $$ f_* \cV_S = f_*( f^* \cV_B
  \otimes_{\cO_S} \cO_S) = \cV_B \otimes_{\cO_B} f_* \cO_S
  = \cV_B \;\;.$$
  It follows easily that $F^\bullet_B = f_* F^\bullet_S$ and
  $\nabla_B= f_*
  \nabla_S$. As $f$, $F_S^\bullet$ and $\nabla_S$ are defined over
  $\oQ$, it follows that  $F^\bullet_B$ and $\nabla_B$ are defined
  over $\oQ$.

\end{proof}

\begin{rem}
The companion statement to \Cref{cor2} that conjugates of special varieties for
$\ZZ$VHSs defined over a number field are special if
and only if it holds true for special points would follow from a
version of \Cref{subvariation} over a fixed number field $L$ rather than
over $\oQ$, but this last version is not clear to us.
 \end{rem}

\medskip
\noindent Bruno Klingler : Humboldt Universit\"at zu Berlin

\noindent email : \texttt{bruno.klingler@hu-berlin.de}

\medskip
\noindent Ania Otwinowska: Humboldt Universit\"at zu Berlin

\noindent email : \texttt{anna.otwinowska@hu-berlin.de}

\medskip
\noindent David Urbanik: University of Toronto

\noindent email : \texttt{david.b.urbanik@gmail.com}
\end{document}